\newtheorem{theoremmy}{Theorem}[section]
\newtheorem{lemmamy}[theoremmy]{Lemma}
\newtheorem{exper}[theoremmy]{Experiment}
\newtheorem{remark}{Remark}
\numberwithin{equation}{section}
\newcommand{\mi}{\mathrm{i}}
\newcommand{\spans}{\mathrm{span}}
\newcommand{\gap}{\mathrm{gap}}
\newcommand{\new}{\mathrm{new}}
\renewcommand{\Range}{\mathcal{R}}
\newcommand{\trace}{\mathrm{tr}}
\newcommand{\PP}{\mathcal{P}}
\newcommand{\QQ}{\mathcal{Q}}
\newcommand{\UU}{\mathcal{U}}
\newcommand{\VV}{\mathcal{V}}
\newcommand{\WW}{\mathcal{W}}
\newcommand{\XX}{\mathcal{X}}
\newcommand{\YY}{\mathcal{Y}}
\newcommand{\ZZ}{\mathcal{Z}}
\newcommand{\bsmallmatrix}[1]{\begin{bmatrix}\begin{smallmatrix}#1
\end{smallmatrix}\end{bmatrix}}
\title{A skew-symmetric Lanczos bidiagonalization method for
computing several largest eigenpairs of a large skew-symmetric matrix}
\author{
Jinzhi Huang\thanks{School of Mathematical Sciences, Soochow University, 215006 Suzhou, China
(\url{jzhuang21@suda.edu.cn}). The work of this author was supported in part
by the Youth Program of the Natural Science Foundation of Jiangsu Province (No. BK20220482). }
\and
Zhongxiao Jia\thanks{Corresponding author.
Department of Mathematical Sciences,
Tsinghua University, 100084 Beijing, China
(\url{jiazx@tsinghua.edu.cn}). The work of this author
was supported in part by the National Natural Science Foundation of China
(No. 12171273). }}
\begin{document}
\maketitle

\begin{abstract}
The spectral decomposition of a real skew-symmetric matrix $A$
can be mathematically transformed into a specific structured
singular value decomposition (SVD) of $A$.
Based on such equivalence, a skew-symmetric Lanczos
bidiagonalization (SSLBD) method is proposed for the specific
SVD problem that computes extreme singular values and the
corresponding singular vectors of $A$, from which the eigenpairs
of $A$ corresponding to the extreme conjugate eigenvalues in
magnitude are recovered pairwise in real arithmetic.
A number of convergence results on the method are established, and
accuracy estimates for approximate singular triplets are given.
In finite precision arithmetic, it is proven that the
semi-orthogonality of each set of basis vectors and the
semi-biorthogonality of two sets of basis vectors
suffice to compute the singular values accurately.
A commonly used efficient partial reorthogonalization strategy
is adapted to maintaining the needed semi-orthogonality and
semi-biorthogonality.
For a practical purpose, an implicitly restarted SSLBD
algorithm is developed with partial reorthogonalization.
Numerical experiments illustrate the effectiveness and
overall efficiency of the algorithm.
\end{abstract}

\begin{keywords}
Skew-symmetric matrix,
spectral decomposition,
eigenvalue,
eigenvector,
singular value decomposition,
singular value,
singular vector,
Lanczos bidiagonalization,
partial reorthogonalization
\end{keywords}

\begin{AMS}
65F15, 15A18, 65F10, 65F25
\end{AMS}

\pagestyle{myheadings}
\thispagestyle{plain}
\markboth{A skew-symmetric Lanczos bidiagonalization method}
{JINZHI HUANG AND ZHONGXIAO JIA}

\section{Introduction}\label{sec:1}
Let $A\in\mathbb{R}^{n\times n}$ be a large scale and possibly
sparse skew-symmetric matrix, i.e., $A^T=-A$, where the
superscript $T$ denotes the transpose of a  matrix or vector.
We consider the eigenvalue problem
\begin{equation}\label{eigA}
  Ax=\lambda x,
\end{equation}
where $\lambda\in\mathbb{C}$ is an eigenvalue of $A$ and
$x\in\mathbb{C}^{n}$ with the 2-norm
$\|x\|=1$ is an corresponding eigenvector.
It is well known that the nonzero eigenvalues $\lambda$ of $A$
are purely imaginary and come in conjugate pairs.
The real skew-symmetric eigenvalue problem arises in a variety
of applications, such as matrix function computations
\cite{cardoso2010exponentials,del2005computation},
the solution of linear quadratic optimal control problems
\cite{mehrmann1991autonomous,zhou1996robust},
model reductions \cite{mencik2015wave,yan1999approximate},
wave propagation solution for repetitive structures
\cite{haragus2008spectra,zhong1995direct},
crack following in anisotropic materials
\cite{apel2002structured,mehrmann2002polynomial},
and some others
\cite{mehrmann2012implicitly,penke2020high,wimmer2012algorithm}.

For small to medium sized skew-symmetric matrices,
several numerical algorithms have been developed
to compute their spectral decompositions in {\em real} arithmetic
\cite{fernando1998accurately,paardekooper1971eigenvalue,
penke2020high,ward1978eigensystem}.
For a large scale skew-symmetric $A$, however, it still lacks a
high-performance numerical algorithm that can make use
of its skew-symmetric structure to compute several eigenvalues
and/or the corresponding eigenvectors only in real arithmetic.
In this paper, we close this gap by proposing a Krylov subspace
type method to compute several extreme conjugate eigenvalues in
magnitude and the corresponding eigenvectors of $A$ that only
uses real arithmetic.

Our work originates from a basic fact that, as will be shown in
Section~\ref{sec2}, the spectral decomposition of the
skew-symmetric $A$ has a close relationship with its specifically
structured singular value decomposition (SVD), where each conjugate
pair of purely imaginary eigenvalues of $A$ corresponds to a double
singular value of $A$ and the mutually orthogonal real and
imaginary parts of associated eigenvectors are the corresponding
left and right singular vectors of $A$.
Therefore, the solution of eigenvalue problem of the skew-symmetric
$A$ is equivalent to the computation of its structured SVD, and the
desired eigenpairs can be recovered from the converged singular
triplets by exploiting this equivalence.

It has been well realized in \cite{ward1978eigensystem} that the
spectral decomposition of the skew-symmetric $A$ can be reduced
to the SVD of a certain bidiagonal matrix $B$ whose size is $n/2$
for $n$ even.
Exploiting such property, Ward and Gray \cite{ward1978eigensystem}
have designed an eigensolver for dense skew-symmetric matrices.
For $A$ large scale, however, the computation of $B$ using
orthogonal similarity transformations is prohibitive due to the
requirement of excessive storage and computations.
Nevertheless, with the help of Lanczos bidiagonalization (LBD)
\cite{paige1982}, whose complete bidiagonal reduction is originally
due to \cite{golub1965calculating}, we are able to compute a
sequence of leading principal matrices of $B$, based on which,
approximations to the extreme singular values of $A$ and/or
the corresponding singular vectors can be computed.
Along this line, a number of LBD based methods have been proposed
and intensively studied, and several practical explicitly and
implicitly restarted LBD type algorithms have been well developed \cite{baglama2005augmented,berry1992large,cullum1983lanczos,
	hochstenbach2004harmonic,jia2003implicitly,jia2010refined,
	larsen1998lanczos,larsen2001combining,stoll2012krylov}.

For the skew-symmetric matrix $A$, LBD owns some unique properties
in both mathematics and numerics.
However, direct applications of the aforementioned
algorithms do not make use of these special properties,
and will thus encounter some severe numerical difficulties.
We will fully exploit these properties to propose a skew-symmetric
LBD (SSLBD) method for computing several extreme singular triplets
of $A$, from which its extreme conjugate eigenvalues in magnitude
and the associate eigenvectors are recovered.
Attractively, our algorithm is performed in real arithmetic when
computing complex eigenpairs of $A$.
Particularly, one conjugate pair of approximations to two conjugate
pairwise eigenpairs can be recovered from {\em one} converged
approximate singular triplet.
We estimate the distance between the {\em subspace} generated by
the real and imaginary parts of a pair of desired conjugate
eigenvectors and the Krylov subspaces generated by SSLBD, and
prove how it tends to zero as the subspace dimension increases.
With these estimates, we establish a priori error bounds for the
approximate eigenvalues and approximate eigenspaces obtained by
the SSLBD method.
One of them is the error bound for the two dimensional approximate
eigenspace, which extends \cite[pp.103, Theorem 4.6]{saad2011numerical},
a classic error bound for the Ritz
vector in the real symmetric (complex Hermitian) case.
These results show how fast our method converges when computing
several extreme singular triplets.

As the subspace dimension increases, the SSLBD method will eventually
become impractical due to the excessive memory and computational cost.
For a practical purpose, it is generally necessary to restart the
method by selecting an increasingly better starting vector when the
maximum iterations allowed are attained, until the restarted algorithm
converges ultimately.
We will develop an implicitly restarted SSLBD algorithm.
Initially, implicit restart was proposed by Sorensen
\cite{sorensen1992implicit} for large eigenvalue problems,
and then it has been developed by Larsen~\cite{larsen2001combining}
and Jia and Niu \cite{jia2003implicitly,jia2010refined} for large
SVD computations.
Our implicit restart is based on them but specialized to the
skew-symmetric $A$.
We will show that two sets of left and right Lanczos vectors generated
by SSLBD are meanwhile biorthogonal for the skew-symmetric $A$.
However, in finite precision arithmetic, this numerical biorthogonality
loses gradually even if they themselves are kept numerically orthonormal.
As a matter of fact, we have found that
the numerical orthogonality of the computed left and right
Lanczos vectors in an available state-of-the-art Lanczos bidiagonalization
type algorithm does not suffice for the SVD problem of
the skew-symmetric $A$, and unfortunate ghosts
appear when the numerical biorthogonality loses severely; that is, a
singular value of $A$ may be recomputed a few times. Therefore,
certain numerical biorthogonality is crucial to make the
method work regularly.

As has been proved by Simon~\cite{simon1984analysis} for the symmetric
Lanczos method on the eigenvalue problem and extended by
Larsen \cite{larsen1998lanczos} to the LBD method for the SVD problem,
as far as the accurate computation of singular values is concerned,
the numerical semi-orthogonality
suffices for a general $A$, and the numerical orthogonality to the level
of machine precision is not necessary. Here semi-orthogonality
means that two vectors are numerically orthogonal to the level
of the square root of machine precision.
To maintain the numerical stability and make the SSLBD method behave like
it in exact arithmetic, we shall prove that the
semi-orthogonality of each set of Lanczos vectors
and the semi-biorthogonality of left and right Lanczos vectors
suffice for the accurate computation of singular values of
the skew-symmetric $A$. We will seek for an effective and
efficient reorthogonalization strategy for this purpose.
Specifically, besides the commonly used partial
reorthogonalization to maintain the desired semi-orthogonality
of each set of Lanczos vectors, we will
propose a partial reorthogonalization strategy to
maintain the numerical semi-biorthogonality of two sets of Lanczos vectors
in order to avoid the ghost phenomena and make
the algorithm converge regularly. We will introduce such kind of
reorthogonalization strategy into the implicitly restarted SSLBD
algorithm and design a simple scheme to recover the desired conjugate
eigenpairs of $A$ pairwise from the converged singular triplets.

The rest of this paper is organized as follows.
In Section~\ref{sec2}, we describe some properties of the spectral
decomposition of $A$, and show
its mathematical equivalence with the SVD of $A$.
Then we establish a close relationship between the SVD of $A$ and
that of an upper bidiagonal matrix whose order is half of that of $A$
when the order of $A$ is even.
In Section~\ref{sec3}, we present some properties of the SSLBD process,
and propose the SSLBD method for computing several
extreme singular values and the corresponding singular vectors
of $A$, from which conjugate approximate
eigenpairs of $A$ are pairwise reconstructed. In Section~\ref{converanal},
we establish convergence results on the SSLBD method.
In Section~\ref{sec:4}, we design a partial reorthogonalization scheme
for the SSLBD process, and develop an implicitly restarted SSLBD
algorithm to compute several extreme singular triplets of $A$.
Numerical experiments are reported in Section~\ref{sec:5}.
We conclude the paper in Section~\ref{sec:6}.

Throughout this paper, we denote by $\mi$ the imaginary unit,
by $X^H$ and $X^{\dag}$ the conjugate transpose and pseudoinverse
of $X$, by $\Range(X)$  the range space of $X$, and
by $\|X\|$ and $\|X\|_F$ the $2$- and Frobenius norms of $X$, respectively.
We denote by $\mathbb{R}^{k}$ and $\mathbb{C}^{k}$ the real and
complex spaces of dimension $k$,
by $I_{k}$ the identity matrix of order $k$, and by $\bm{0}_{k}$ and
$\bm{0}_{k,\ell}$ the zero matrices of orders $k\times k$ and $k\times \ell$,
respectively. The subscripts of identity and zero matrices are omitted when
they are clear from the context.

\section{Preliminaries} \label{sec2}
For the skew-symmetric matrix $A\in\mathbb{R}^{n\times n}$,
the following five properties are well known and easily justified:
Property (\romannumeral1): $A$ is diagonalizable by a
unitary similarity transformation;
Property (\romannumeral2): the eigenvalues $\lambda$ of $A$ are
either purely imaginary or zero;
Property (\romannumeral3): for an eigenpair $(\lambda,z)$ of $A$,
the conjugate $(\bar\lambda,\bar z)$ is also an eigenpair of $A$;
Property (\romannumeral4): the real and imaginary parts of the eigenvector $z$
corresponding to a purely imaginary eigenvalue of $A$ have the same length,
and are mutually orthogonal;
Property (\romannumeral5): $A$ of odd order must be singular, and has
at least one zero eigenvalue.
It is easily deduced that a nonsingular $A$ must have even
order, i.e., $n=2\ell$ for some integer $\ell$, and its eigenvalues
$\lambda$ are purely imaginary and
come in conjugate, i.e., plus-minus, pairs.

We formulate the following basic
results as a theorem, which establishes close structure relationships
between the spectral decomposition of $A$ and its SVD and
forms the basis of our method and algorithm to be proposed.
For completeness and our frequent use in this paper,
we will give a rigorous proof by exploiting the above five properties.

\begin{theoremmy}\label{thm1}
The spectral decomposition of the nonsingular skew-symmetric
$A\in\mathbb{R}^{n\times n}$ with $n=2\ell$ is
\begin{equation}\label{eigdecompA}
A=X\Lambda X^H
\end{equation}
with
\begin{equation}\label{defWL}
X=\begin{bmatrix}
\frac{1}{\sqrt 2}(U+\mi V)
&\frac{1}{\sqrt 2}(U-\mi V)\end{bmatrix}
\qquad\mbox{and}\qquad
\Lambda=\diag\{\mi \Sigma,-\mi\Sigma\},
\end{equation}
where $U\in\mathbb{R}^{n\times \ell}$ and
$V\in\mathbb{R}^{n\times \ell}$ are orthonormal and biorthogonal:
\begin{equation}\label{orthXYZ}
	U^TU=I,\qquad
	V^TV=I,\qquad
	U^TV=\bm{0},
\end{equation}
and $\Sigma=\diag\{\sigma_1,\dots,\sigma_{\ell}\}
\in\mathbb{R}^{\ell\times \ell}$
with $\sigma_1,\dots,\sigma_\ell>0$. The SVD of $A$ is
\begin{equation}\label{SVDA}
A=\begin{bmatrix}U &V\end{bmatrix}
\begin{bmatrix}\Sigma&\\&\Sigma\end{bmatrix}
\begin{bmatrix}V&-U\end{bmatrix}^T.
\end{equation}
\end{theoremmy}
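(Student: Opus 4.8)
The plan is to construct the decomposition directly from an orthonormal eigenbasis of $A$ and then to read off the SVD from it. By Properties~(\romannumeral1)--(\romannumeral3) and the assumed nonsingularity together with Property~(\romannumeral5), we have $n=2\ell$, all eigenvalues of $A$ are nonzero and purely imaginary, and they occur in conjugate pairs; I would list them with multiplicity as $\mi\sigma_1,\dots,\mi\sigma_\ell,-\mi\sigma_1,\dots,-\mi\sigma_\ell$ with all $\sigma_j>0$ and set $\Sigma=\diag\{\sigma_1,\dots,\sigma_\ell\}$. Using Property~(\romannumeral1), pick a unit eigenvector $z_j$ for $\mi\sigma_j$ so that $z_1,\dots,z_\ell$ are orthonormal (choosing orthonormal bases inside each eigenspace when an eigenvalue is repeated). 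By Property~(\romannumeral3), $\bar z_1,\dots,\bar z_\ell$ are then unit eigenvectors for $-\mi\sigma_1,\dots,-\mi\sigma_\ell$ and are orthonormal as well, since conjugation preserves inner products up to an overall conjugate. Because $\mi\sigma_j\neq-\mi\sigma_k$ for all $j,k$, every $z_j$ is orthogonal to every $\bar z_k$, so $X=\begin{bmatrix}z_1&\cdots&z_\ell&\bar z_1&\cdots&\bar z_\ell\end{bmatrix}$ is square with orthonormal columns, hence unitary, and $A=X\Lambda X^H$ with $\Lambda=\diag\{\mi\Sigma,-\mi\Sigma\}$.

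Next I would pass to real and imaginary parts. Write $z_j=\tfrac{1}{\sqrt2}(u_j+\mi v_j)$ with $u_j,v_j\in\mathbb{R}^n$. Property~(\romannumeral4) (which holds for \emph{every} eigenvector of a nonzero purely imaginary eigenvalue, because $z_j^TAz_j=z_j^TA^Tz_j=-z_j^TAz_j$ forces $z_j^Tz_j=0$) gives $\|u_j\|=\|v_j\|$ and $u_j^Tv_j=0$, which with $\|z_j\|=1$ yields $\|u_j\|=\|v_j\|=1$. Separating real and imaginary parts in $Az_j=\mi\sigma_jz_j$ gives $Av_j=\sigma_ju_j$ and $Au_j=-\sigma_jv_j$; invoking $A^T=-A$ turns these into $A^Tu_j=\sigma_jv_j$ and $A^Tv_j=-\sigma_ju_j$, so that $(\sigma_j;u_j,v_j)$ and $(\sigma_j;v_j,-u_j)$ are both singular triplets of $A$. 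With $U=\begin{bmatrix}u_1&\cdots&u_\ell\end{bmatrix}$ and $V=\begin{bmatrix}v_1&\cdots&v_\ell\end{bmatrix}$, these relations read $AV=U\Sigma$ and $A(-U)=V\Sigma$, i.e., $A\begin{bmatrix}V&-U\end{bmatrix}=\begin{bmatrix}U&V\end{bmatrix}\diag\{\Sigma,\Sigma\}$.

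It remains to establish the orthogonality relations \eqref{orthXYZ}. Expanding the real and imaginary parts of $z_j^Hz_k=\delta_{jk}$ and $z_j^H\bar z_k=0$ produces the four identities $u_j^Tu_k+v_j^Tv_k=2\delta_{jk}$, $u_j^Tv_k=v_j^Tu_k$, $u_j^Tu_k=v_j^Tv_k$, and $u_j^Tv_k+v_j^Tu_k=0$, whose only solution is $u_j^Tu_k=v_j^Tv_k=\delta_{jk}$, $u_j^Tv_k=0$; hence $U^TU=V^TV=I$ and $U^TV=\bm 0$. Then $\begin{bmatrix}U&V\end{bmatrix}$ is orthogonal, and so is $\begin{bmatrix}V&-U\end{bmatrix}$, so the relation $A\begin{bmatrix}V&-U\end{bmatrix}=\begin{bmatrix}U&V\end{bmatrix}\diag\{\Sigma,\Sigma\}$ rearranges into \eqref{SVDA}, an SVD of $A$ whose $n$ positive singular values are $\sigma_1,\dots,\sigma_\ell$ each repeated twice; assembling the same blocks as $X=\tfrac{1}{\sqrt2}\begin{bmatrix}U+\mi V&U-\mi V\end{bmatrix}$ gives \eqref{eigdecompA}--\eqref{defWL}. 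The one delicate point is the bookkeeping for repeated eigenvalues — that an orthonormal eigenbasis can be chosen so that conjugation exactly pairs the $\mi\sigma$- and $-\mi\sigma$-eigenspaces — but this is immediate because conjugation is an anti-unitary bijection between those two eigenspaces, so no real obstacle arises.
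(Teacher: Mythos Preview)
Your proof is correct and follows essentially the same route as the paper: build the unitary eigenvector matrix $X$ from an orthonormal basis $z_1,\dots,z_\ell$ of the $+\mi$ eigenspaces together with its conjugate, then extract the real orthogonality relations \eqref{orthXYZ} from $z_j^Hz_k=\delta_{jk}$ and $z_j^H\bar z_k=0$, and read off the SVD. The only cosmetic difference is in justifying $z_j^H\bar z_k=0$ (equivalently $X_+^TX_+=\bm 0$): you invoke orthogonality of eigenvectors of a normal matrix for distinct eigenvalues, whereas the paper obtains it from the Sylvester-type identity $X_+^TX_+\Lambda_++\Lambda_+X_+^TX_+=\bm 0$ forced by skew-symmetry and the positivity of the $\sigma_j$.
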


\begin{proof}
By Properties (i) and (ii) and the assumption that $A$ is nonsingular,
we denote by $\Lambda_{+}\in\mathbb{C}^{\ell\times \ell}$
the diagonal matrix consisting
of all the eigenvalues of $A$ with positive imaginary parts, and
by $X_{+}\in\mathbb{C}^{n\times \ell}$ the corresponding orthonormal
eigenvector matrix. By
Property (iv), we can write $(\Lambda_{+},X_{+})$ as
\begin{equation}\label{defLW1}
(\Lambda_{+},X_{+})=\left(\mi\Sigma,\tfrac{1}{\sqrt 2}(U+\mi V)\right),
\end{equation}
where the columns of $U$ and $V$ have unit-length, and
$AX_+=X_+\Lambda_+$ and $X_{+}^HX_{+}=I$.
By these and Property (iii), the conjugate of $(\Lambda_+,X_+)$ is
\begin{equation}\label{defLW2}
(\Lambda_-,X_-)
=\left(-\mi\Sigma,\tfrac{1}{\sqrt 2}(U-\mi V)\right),
\end{equation}
and $AX_-=X_-\Lambda_-$ and
$X_{-}^HX_{-}=I$. As a result, $X$ and $\Lambda$ defined by
\eqref{defWL} are the eigenvector and eigenvalue matrices of $A$,
respectively, and
\begin{equation}\label{eigeq}
AX=X\Lambda.
\end{equation}

Premultiplying the two sides of $AX_+=X_+\Lambda_+$ by $X_+^T$ delivers
$X_+^TAX_+=X_+^TX_+\Lambda_+$,
which, by the skew-symmetry of $A$, shows that
\begin{equation*}
 X_+^TX_+\Lambda_++\Lambda_+X_+^TX_+=\bm{0}.
 \end{equation*}
Since $\Lambda_+=\mi\Sigma$ and $\Sigma$ is a positive diagonal matrix,
it follows from the above equation that
\begin{equation*}
X_+^TX_+=\bm{0}.
\end{equation*}
By definition \eqref{defWL} of $X$ and
the fact that $X_+$ and $X_-$ are column orthonormal, $X$ is unitary.
Postmultiplying \eqref{eigeq} by $X^H$ yields \eqref{eigdecompA}, from
which it follows that \eqref{SVDA} holds.

Inserting $X_+=\frac{1}{\sqrt2}(U+\mi V)$ into
$X_+^HX_+=I$ and $X_+^TX_+=\bm{0}$ and solving the
resulting equations for $U^TU, V^TV$
and $U^TV$, we obtain \eqref{orthXYZ}.
This shows that both $[U,V]$ and $[V,-U]$ are orthogonal.
Therefore, \eqref{SVDA} is the SVD of $A$.
\end{proof}

\begin{remark}\label{rem1}
For a singular skew-symmetric $A\in\mathbb{R}^{n\times n}$ with
$n=2\ell+\ell_0$ where $\ell_0$ is the multiplicity of zero eigenvalues of $A$,
we can still write the spectral decomposition of $A$
as \eqref{eigdecompA} with some slight rearrangements for the eigenvalue and
eigenvector matrices $\Lambda$ and $X$:
\begin{equation}\label{defXL2}
	X=\begin{bmatrix}
		\frac{1}{\sqrt 2}(U+\mi V)
		&\frac{1}{\sqrt 2}(U-\mi V)& X_0\end{bmatrix}
	\qquad\mbox{and}\qquad
	\Lambda=\diag\{\mi \Sigma,-\mi\Sigma,\bm{0}_{\ell_0}\},
\end{equation}
where $U,V$ and $\Sigma$ are as in Theorem~\ref{thm1} and the columns of $X_0$
form an orthonormal basis of the null space of $A$.
The proof is analogous to that of Theorem~\ref{thm1} and
thus omitted. In this case, relation \eqref{SVDA} gives the thin SVD of $A$.	
\end{remark}

For $U$ and $V$ in \eqref{SVDA}, write
$$
U=[u_1,u_2,\ldots,u_{\ell}]
\qquad\mbox{and}\qquad
V=[v_1,v_2,\ldots,v_{\ell}].
$$
On the basis of Theorem~\ref{thm1}, in the sequel, we denote by
\begin{equation}\label{eigpair}
(\lambda_{\pm j},x_{\pm j})=\left(\pm\mi\sigma_j,
\tfrac{1}{\sqrt2}(u_j\pm\mi v_j)\right),\qquad j=1,\dots,\ell.
\end{equation}
The SVD~\eqref{SVDA} indicates that $(\sigma_j,u_j,v_j)$ and
$(\sigma_j,v_j,-u_j)$ are two singular triplets of $A$ corresponding
to the multiple singular value $\sigma_j$.
Therefore, in order to obtain the conjugate eigenpairs
$(\lambda_{\pm j},x_{\pm j})$ of $A$, one can compute only the
singular triplet $(\sigma_j,u_j,v_j)$ or
$(\sigma_j,v_j,-u_j)$, and then recovers the desired eigenpairs
from the singular triplet by \eqref{eigpair}.

Next we present a bidiagonal decomposition of the nonsingular
skew-symmetric $A$ with order $n=2\ell$, which essentially appears in
\cite{ward1978eigensystem}. For completeness, we include a proof.

\begin{theoremmy}\label{thm2}
Assume that the skew-symmetric $A\in\mathbb{R}^{n\times n}$
with $n=2\ell$ is nonsingular.
Then there exist two orthonormal matrices
$P\in\mathbb{R}^{n\times \ell}$ and
$Q\in\mathbb{R}^{n\times \ell}$ satisfying
$P^TQ=\mathbf{0}$ and an upper bidiagonal matrix
$B\in\mathbb{R}^{\ell\times\ell}$  such that
\begin{equation}\label{bidiagA}
A=
\begin{bmatrix}P&Q\end{bmatrix}
\begin{bmatrix}B&\\&B^T\end{bmatrix}
\begin{bmatrix}Q&-P\end{bmatrix}^T.
\end{equation}
\end{theoremmy}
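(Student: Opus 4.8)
The plan is to follow the construction of Ward and Gray \cite{ward1978eigensystem}: reduce $A$ by an orthogonal similarity to a skew-symmetric tridiagonal matrix, and then regroup its entries by a perfect-shuffle permutation. The starting observation is that a Householder reflector $H$ is symmetric and orthogonal, so the congruence $A\mapsto HAH$ preserves skew-symmetry, since $(HAH)^T=HA^TH=-HAH$. Applying the usual chain of $n-2$ Householder reflectors, where the $k$-th one annihilates the entries below position $(k+1,k)$ in the current $k$-th column (and, by skew-symmetry, simultaneously those to the right of position $(k,k+1)$ in the $k$-th row), I would obtain an orthogonal $S$ with $\hat T:=S^TAS$ skew-symmetric tridiagonal; in particular its main diagonal vanishes.

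Next I would split the index set by parity. Let $\Pi$ be the permutation matrix that reorders the indices as $(2,4,\dots,2\ell,1,3,\dots,2\ell-1)$. Since $\hat T$ couples only indices that differ by one, that is, an even index only with an odd one, the matrix $\Pi^T\hat T\Pi$ has the $2\times2$ block form
$$\Pi^T\hat T\Pi=\begin{bmatrix}\bm 0 & B\\ -B^T & \bm 0\end{bmatrix},$$
and tracking which entries of $\hat T$ land in the (even, odd) block shows that $B\in\mathbb{R}^{\ell\times\ell}$ is nonzero only on its main diagonal, $B_{p,p}=\hat t_{2p,2p-1}$, and first superdiagonal, $B_{p,p+1}=\hat t_{2p,2p+1}$, that is, $B$ is upper bidiagonal, while the (odd, even) block equals $-B^T$ by skew-symmetry. (Listing the even indices before the odd ones in $\Pi$, rather than the other way round, is exactly what makes $B$ upper rather than lower bidiagonal.)

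Finally, I would set $[P\ Q]:=S\Pi$ and split this orthogonal $n\times n$ matrix into its first and last $\ell$ columns; orthogonality of $S\Pi$ then yields $P^TP=I$, $Q^TQ=I$ and $P^TQ=\bm 0$ at once, and a short block multiplication gives
$$A=[P\ Q]\begin{bmatrix}\bm 0 & B\\ -B^T & \bm 0\end{bmatrix}[P\ Q]^T=PBQ^T-QB^TP^T=[P\ Q]\begin{bmatrix}B&\\&B^T\end{bmatrix}[Q\ -P]^T,$$
which is \eqref{bidiagA}. The only delicate point is the bookkeeping in the permutation step — fixing the parity ordering so that $B$ comes out upper bidiagonal, and keeping the signs and transposes straight so that the lower-left block is exactly $-B^T$; everything else is routine linear algebra. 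I would also note in passing that, since a diagonal matrix is trivially upper bidiagonal, the choice $B=\Sigma$, $P=U$, $Q=V$ furnished by Theorem~\ref{thm1} already satisfies the statement through \eqref{SVDA}; the tridiagonalization argument is preferable here because it exhibits a genuinely bidiagonal $B$ and makes explicit the reduction to a half-sized SVD problem that is exploited repeatedly in the sequel.
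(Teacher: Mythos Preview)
Your proof is correct and follows essentially the same route as the paper: orthogonally tridiagonalize $A$ (the paper simply cites \cite{ward1978eigensystem} for this, while you spell out the Householder mechanism), then apply the even/odd parity permutation to extract the upper bidiagonal $B$ and read off $P,Q$ from the columns of the orthogonal factor. The only cosmetic difference is that the paper permutes the left and right sides differently to land directly on $\diag\{B,-B^T\}$, whereas you use the same permutation on both sides to get the anti-block-diagonal form $\bsmallmatrix{\bm 0 & B\\ -B^T & \bm 0}$ and then rewrite it as \eqref{bidiagA}; the content is identical.
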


\begin{proof}
It is known from, e.g., \cite{ward1978eigensystem} that $A$ is
orthogonally similar to a skew-symmetric tridiagonal
matrix, that is, there exists an orthogonal
matrix $F\in\mathbb{R}^{n\times n}$
such that
\begin{equation}\label{defH}
  A^{\prime}=F^TAF=\begin{bmatrix}
0&-\alpha_1&&\\
\alpha_1&\ddots&\ddots&\\
&\ddots&\ddots&-\alpha_{n-1}\\
&&\alpha_{n-1}&0
\end{bmatrix}.
\end{equation}

Denote $\Pi_1=[e_{2,n},e_{4,n},\dots,e_{2\ell,n}]$ and
$\Pi_2=[e_{1,n},e_{3,n},\dots,e_{2\ell-1,n}]$
with $e_{j,n}$ being the $j$th column of $I_n$, $j=1,\dots,n$.
Then it is easy to verify that
\begin{eqnarray}
	A^{\prime\prime}&=&\ \begin{bmatrix}\Pi_1&\Pi_2\end{bmatrix}^TA^{\prime}
	\begin{bmatrix}\Pi_2&\Pi_1\end{bmatrix}
	=\begin{bmatrix}B&\\&-B^T\end{bmatrix},\nonumber \\
	B\ &=&\ \Pi_1^TA^{\prime}\Pi_2=\begin{bmatrix}
		\alpha_1&-\alpha_2&&&\\[-0.1em]
		&\alpha_3&-\alpha_4&&\\[-0.1em]
		&&\ddots&\ddots&\\[-0.1em]
		&&&\ddots&-\alpha_{2\ell-2}\\[-0.1em]
		&&&&\alpha_{2\ell-1}
	\end{bmatrix}.\label{defB}
\end{eqnarray}
Combining these two relations with \eqref{defH}, we obtain
\begin{eqnarray}
A&=&FA^{\prime}F^T=F\begin{bmatrix}\Pi_1&\Pi_2\end{bmatrix}
A^{\prime\prime}
\begin{bmatrix}\Pi_2&\Pi_1\end{bmatrix}^TF^T \nonumber\\
&=&\begin{bmatrix}F\Pi_1&F\Pi_2\end{bmatrix}A^{\prime\prime}
\begin{bmatrix}F\Pi_2&F\Pi_1\end{bmatrix}^T \nonumber \\
&=&\begin{bmatrix}P&Q\end{bmatrix}
\begin{bmatrix}B&\\&-B^T\end{bmatrix}
\begin{bmatrix}Q&P\end{bmatrix}^T,\label{skewbidag}
\end{eqnarray}
which proves \eqref{bidiagA}, where we have denoted
$P=F\Pi_1$ and $Q=F\Pi_2$.
Note that $F$ is orthogonal and $\Pi_1$, $\Pi_2$ are not only
orthonormal but also biorthogonal:
$\Pi_1^T\Pi_1=\Pi_2^T\Pi_2=I$ and $\Pi_1^T\Pi_2=\mathbf{0}$.
Therefore, $P^TP=Q^TQ=I$ and $P^TQ=\bm{0}$.
\end{proof}

\begin{remark}\label{rem2}
For a singular skew-symmetric $A\in\mathbb{R}^{n\times n}$ with
$n=2\ell+\ell_0$ where $\ell_0$ is the multiplicity of zero
eigenvalues of $A$, the lower diagonal elements
$\alpha_i,i=2\ell,2\ell+1,\dots,n-1$ of
$A^{\prime}$ in \eqref{defH} are zeros.
Taking $\Pi_1$ and $\Pi_2$ as in \eqref{defB} and $\Pi_3=[e_{2\ell+1,n},e_{2\ell+2,n},\dots,e_{n,n}]$, we obtain
\begin{equation}\label{defB2}
	A^{\prime\prime}=\ \begin{bmatrix}\Pi_1&\Pi_2&\Pi_3\end{bmatrix}^TA^{\prime}
	\begin{bmatrix}\Pi_2&\Pi_1&\Pi_3\end{bmatrix}
	=\diag\{B,-B^T, \bm{0}_{\ell_0}\}
\end{equation}
with $B$ defined by \eqref{defB}.
Using the same derivations as in the proof of Theorem~\ref{thm2},
we have
\begin{equation*}
	A=\begin{bmatrix}P&Q&X_0\end{bmatrix}
	\begin{bmatrix}B\!\!&&\\&\!\!B^T\!\!&\\&&\!\!\bm{0}_{\ell_0}\end{bmatrix}
	\begin{bmatrix}Q&-P&X_0\end{bmatrix}^T,	
\end{equation*}
where $P$ and $Q$ are as in \eqref{skewbidag} and $X_0=F\Pi_3$
is the orthonormal basis matrix of the null space of $A$.
This decomposition is a precise extension of \eqref{bidiagA}
to $A$ with any order.
\end{remark}

Combining Remark~\ref{rem2} with Remark~\ref{rem1} after
Theorem~\ref{thm1}, for ease of presentation, in the sequel
we will always assume that $A$ has even order and is nonsingular.
However, our method and algorithm to be proposed apply to a singular
skew-symmetric $A$ with only some slight modifications.

Decomposition \eqref{bidiagA} is a bidiagonal decomposition of $A$
that reduces $A$ to $\diag\{B,B^T\}$ using the structured orthogonal
matrices $\begin{bmatrix}P&Q\end{bmatrix}^T$ and
$\begin{bmatrix}Q&-P\end{bmatrix}$ from the left and right,
respectively. The singular values of $A$ are the union of
those of $B$ and $B^T$ but the SVD of $B$ alone delivers
the whole SVD of $A$:
Let $B=U_B\Sigma_B V_B^T$ be the SVD of $B$.
Then Theorem~\ref{thm2} indicates that $(\Sigma,U,V)=(\Sigma_B,PU_B,QV_B)$
is an $\ell$-dimensional partial SVD of $A$ with $U$ and $V$
being orthonormal and $U^TV=\mathbf{0}$.
By Theorem~\ref{thm1}, the spectral decomposition
\eqref{eigdecompA} of $A$ can be restored directly from $(\Sigma,U,V)$.
This is exactly the working mechanism of the
eigensolver proposed in \cite{ward1978eigensystem} for small to
medium sized skew-symmetric matrices.

For $A$ large, using orthogonal transformations to construct
$P$, $Q$ and $B$ in \eqref{bidiagA} and computing the SVD of
$B$ are unaffordable.
Fortunately, based on decomposition \eqref{bidiagA},
we can perform the LBD process on the skew-symmetric $A$, i.e.,
the SSLBD process, to successively generate
the columns of $P$ and $Q$ and the leading principal matrices of $B$, so
that the Rayleigh--Ritz projection comes
into the computation of a partial SVD of $A$.

\section{The SSLBD method}\label{sec3}
For ease of presentation, from now on,
we always assume that the eigenvalues $\lambda_{\pm j}
=\pm\mi\sigma_j$ of $A$ in \eqref{eigpair} are simple, and that
$\sigma_1,\dots,\sigma_\ell$ are labeled in decreasing order:
\begin{equation}\label{labeleig}
  \sigma_1>\sigma_2>\dots>\sigma_\ell>0.
\end{equation}
Our goal is to compute the $k$ pairs of extreme, e.g., largest conjugate
eigenvalues $\lambda_{\pm 1},\dots,\lambda_{\pm k}$
and/or the corresponding eigenvectors $x_{\pm 1},\dots,x_{\pm k}$ defined
by \eqref{eigpair}.
By Theorem~\ref{thm1}, this amounts to computing the following partial SVD of
$A$:
$$
(\Sigma_k,U_k,V_k)=\left(\mathrm{diag}\{\sigma_1,\dots,\sigma_k\},
[u_1,\dots,u_k],[v_1,\dots,v_k]\right).
$$

\subsection{The $m$-step SSLBD process}\label{subsec:1}
Algorithm~\ref{alg1} sketches the $m$-step SSLBD process on the
skew-symmetric $A$, which is a variant of the lower LBD
process proposed by Paige and Saunders \cite{paige1982}.

\begin{algorithm}[htbp]
\caption{The $m$-step SSLBD process.}
\begin{algorithmic}[1]\label{alg1}
\STATE{Initialization:\ Set $\gamma_0=0$ and $p_{0}=\bm{0}$,
and choose $q_1\in\mathbb{R}^{n}$ with $\|q_1\|=1$.}
\FOR{$j=1,\dots,m<\ell$}
\STATE{Compute $s_j=Aq_j-\gamma_{j-1}p_{j-1}$ and
	$\beta_j=\|s_j\|$.} \label{skwbld1}

\STATE{\textbf{if} $\beta_j=0$ \textbf{then} break;
\textbf{else} calculate $p_j=\frac{1}{\beta_j}s_j$.}  \label{skwbld2}

\STATE{Compute $t_j=-Ap_j-\beta_jq_j$ and
	$\gamma_{j}=\|t_j\|$.} \label{skwbld3}

\STATE{\textbf{if} $\gamma_j=0$ \textbf{then} break;
\textbf{else} calculate $q_{j+1}=\frac{1}{\gamma_{j}}t_j$.} \label{skwbld4}
\ENDFOR
\end{algorithmic}
\end{algorithm}

Assume that the $m$-step SSLBD process does not
break down for $m<\ell$, and note that $A^TA=-A^2$ and $AA^T=-A^2$.
Then the process computes the orthonormal base $\{p_j\}_{j=1}^m$
and $\{q_j\}_{j=1}^{m+1}$ of the Krylov subspaces
\begin{equation}\label{defUV}
\UU_m=\mathcal{K}(A^2,m,p_1)
\qquad\mbox{and}\qquad
\VV_{m+1}=\mathcal{K}(A^2,m+1,q_1)
\end{equation}
generated by $A^2$ and the starting vectors $p_1$ and $q_1$ with
$p_1=Aq_1/\|Aq_1\|$, respectively. Denote
$P_j=[p_1,\dots,p_j]$ for $j=1,\dots,m$ and
$Q_j=[q_1,\dots,q_j]$ for $j=1,\dots,m+1$, whose columns are
called left and right Lanczos vectors,
respectively.
Then the $m$-step SSLBD process can be written in the matrix form:
\begin{equation}\label{LBDmat}
	\left\{\begin{aligned}
		&AQ_m=P_mB_m,\\[0.5em]
		&AP_m=-Q_{m} B_{m}^T-\gamma_mq_{m+1}e_{m,m}^T,
	\end{aligned}\right.
\qquad\mbox{with}\qquad
B_m=\begin{bmatrix}
	\beta_1&\gamma_1&&\\&\ddots&\ddots&\\&&\ddots&\gamma_{m-1}\\&&&\beta_{m}
	\end{bmatrix},
\end{equation}
where $e_{m,m}$ is the $m$th column of $I_m$.
It is well known that the singular values of $B_m$ are simple
whenever the $m$-step SSLBD process does not break down.
For later use, denote $\widehat B_m=[B_m,\gamma_me_{m,m}]$.
Then we can write the second relation in \eqref{LBDmat}
as $AP_m=-Q_{m+1}\widehat B_m^T$.

\begin{theoremmy}\label{thm3}
Let $P_m$ and $Q_{m+1}$ be generated by Algorithm~\ref{alg1}
without breakdown for $m<\ell$. Then $P_{m}$ and $Q_{m+1}$
are biorthogonal:
\begin{equation}\label{orthUV}
Q_{m+1}^TP_m=\bm{0}.
\end{equation}
\end{theoremmy}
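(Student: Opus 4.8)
The plan is to prove the biorthogonality $Q_{m+1}^T P_m = \bm{0}$ by induction on $m$, working entirely from the two matrix relations in \eqref{LBDmat} together with the known orthonormality $P_m^T P_m = I$, $Q_{m+1}^T Q_{m+1} = I$ and the skew-symmetry $A^T = -A$. The key algebraic observation is that multiplying the first relation $AQ_m = P_m B_m$ on the left by $P_m^T$ and using $A^T=-A$ on the second relation should let me express the ``cross'' Gram matrix $Q_{m+1}^T P_m$ in terms of $B_m$ and $\widehat B_m$, and the bidiagonal (in particular triangular) structure of these matrices will force the cross terms to vanish. Concretely, from $AP_m = -Q_{m+1}\widehat B_m^T$ we get $P_m^T A^T = -\widehat B_m Q_{m+1}^T$, i.e. $-P_m^T A = -\widehat B_m Q_{m+1}^T$, so $P_m^T A Q_m = \widehat B_m Q_{m+1}^T Q_m = \widehat B_m \cdot \bsmallmatrix{I_m \\ 0} = B_m$ (using $Q_{m+1}^T Q_m = \bsmallmatrix{I_m\\0}$). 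On the other hand, $P_m^T A Q_m = P_m^T P_m B_m = B_m$. This is consistent but not yet the claim; the point is that I instead need to track the matrix $M_m := Q_{m+1}^T P_m \in \mathbb{R}^{(m+1)\times m}$ and derive a recursion for it.

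The induction is on $m$. For the base case $m=1$: from line~\ref{skwbld3}, $t_1 = -Ap_1 - \beta_1 q_1$, so $q_1^T t_1 = -q_1^T A p_1 - \beta_1$. Now $p_1 = s_1/\beta_1 = Aq_1/\beta_1$ (since $\gamma_0=0$, $p_0=\bm 0$), hence $q_1^T A p_1 = q_1^T A A q_1/\beta_1 = -q_1^T A^T A q_1/\beta_1 = -\|Aq_1\|^2/\beta_1 = -\beta_1$; therefore $q_1^T t_1 = \beta_1 - \beta_1 = 0$, giving $q_2^T p_1 = 0$, and $q_1^T p_1 = q_1^T A q_1/\beta_1 = 0$ by skew-symmetry. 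Thus $Q_2^T P_1 = \bm 0$. For the inductive step, assume $Q_m^T P_{m-1} = \bm 0$ and $q_{m+1}$ has just been formed; I must show $q_{m+1}^T p_j = 0$ for $j\le m$ and $q_j^T p_m = 0$ for $j\le m$. The cleanest route is to compute $P_m^T A Q_{m+1}$ two ways. First, $AQ_{m+1} = [P_m B_m \mid A q_{m+1}]$, and from the second relation $A q_{m+1} = -(1/\gamma_m)(A^2 p_m$ worked out via $AP_m$ relation) — better: use $A P_m = -Q_{m+1}\widehat B_m^T$ transposed, $P_m^T A = \widehat B_m Q_{m+1}^T$, so $P_m^T A Q_{m+1} = \widehat B_m (Q_{m+1}^T Q_{m+1}) = \widehat B_m$. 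Second, $P_m^T A Q_{m+1} = [P_m^T P_m B_m \mid P_m^T A q_{m+1}] = [B_m \mid P_m^T A q_{m+1}]$. Comparing the last columns: $P_m^T A q_{m+1} = \gamma_m e_{m,m}$, which is automatic. So instead I should directly exploit: from $AQ_m = P_m B_m$, premultiply by $Q_{m+1}^T$: $Q_{m+1}^T A Q_m = (Q_{m+1}^T P_m) B_m$. But also $Q_{m+1}^T A Q_m = -(A^T Q_{m+1})^T Q_m = -(-A Q_{m+1})^T Q_m$; and $AQ_{m+1}$ we do not have a clean form for (it needs step $m+1$). The workable identity is: premultiply the \emph{second} relation $AP_m = -Q_{m+1}\widehat B_m^T$ on the left by $Q_{m+1}^T$ and use skew-symmetry on the left side via $Q_{m+1}^T A P_m = -(A Q_{m+1})^T P_m$ — again needing $AQ_{m+1}$. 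So the honest approach is to premultiply $AQ_m = P_mB_m$ by $Q_m^T$: $Q_m^T A Q_m = Q_m^T P_m B_m$; the left side is skew-symmetric, while $Q_m^T P_m B_m$ has a lower-triangular-times-upper-bidiagonal structure under the inductive hypothesis, forcing $Q_m^T P_m = \bm 0$; then handle the extra row $q_{m+1}^T P_m$ separately using line~\ref{skwbld4} and line~\ref{skwbld2}.

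I expect the main obstacle to be bookkeeping the ``boundary'' Lanczos vectors $p_m$ and $q_{m+1}$ that are introduced in the current step and for which the matrix relations \eqref{LBDmat} only give partial information; one must peel these off and use the defining formulas in lines~\ref{skwbld1}--\ref{skwbld4} of Algorithm~\ref{alg1} together with skew-symmetry ($q^T A q = 0$ for any $q$, and $(Ax)^T(Ay) = -x^T A^2 y = (A^Tx)^T(A^Ty)$-type identities). Once the structural recursion $M_m B_m$ being skew-symmetric (with $M_m$ strictly respecting the triangular pattern of the previous step) is set up, the vanishing of $M_m = Q_{m+1}^T P_m$ follows by matching the triangular structure entry by entry, since $B_m$ is upper bidiagonal with nonzero diagonal (as the process does not break down), hence invertible, and a skew-symmetric matrix equal to (strictly structured) times (invertible bidiagonal) is forced to be zero. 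I would present the argument compactly: establish $Q_m^TP_m=\bm 0$ by the skew-symmetry/triangularity argument, then append the single new row $q_{m+1}^T P_m = \bm 0$ using $q_{m+1} = t_m/\gamma_m$ with $t_m = -Ap_m - \beta_m q_m$, giving $q_{m+1}^T p_j = -(p_m^T A^T p_j)/\gamma_m - \beta_m (q_m^T p_j)/\gamma_m$ wait — sign: $q_{m+1}^T p_j = (t_m^T p_j)/\gamma_m = (-p_m^T A^T p_j - \beta_m q_m^T p_j)/\gamma_m = (p_m^T A p_j - \beta_m q_m^T p_j)/\gamma_m$, and $p_m^T A p_j = 0$ for $j<m$ can be derived from $AP_m = -Q_{m+1}\widehat B_m^T$ combined with the just-proven $Q_m^T P_{m-1}=\bm 0$, while $q_m^T p_j = 0$ for $j \le m-1$ is the inductive hypothesis and $q_m^T p_m$ will be shown $=0$ along the way; the diagonal term $q_{m+1}^Tp_m$ similarly reduces to $p_m^TAp_m - \beta_m q_m^T p_m = 0 - 0$.
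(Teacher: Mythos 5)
Your final plan is essentially the paper's own proof: induction on $m$, using the identity $Q_m^TAQ_m=Q_m^TP_mB_m$ together with the skew-symmetry of $A$ and the nonsingular upper-bidiagonal structure of $B_m$ to force $Q_m^TP_m=\bm{0}$, and then handling the extra row via $q_{m+1}=\gamma_m^{-1}(-Ap_m-\beta_m q_m)$ and $AP_{m-1}=-Q_m\widehat B_{m-1}^T$, exactly as in the paper. Two small slips to repair in the write-up: in your base case, $q_1^Tt_1=0$ shows $q_1\perp q_2$, not $q_2^Tp_1=0$ (the latter follows from your own general formula $q_{m+1}^Tp_j=(p_m^TAp_j-\beta_m q_m^Tp_j)/\gamma_m$ with $m=j=1$), and in the final step the vanishing of $p_m^TAp_j$ for $j<m$ requires the just-proven $Q_m^Tp_m=\bm{0}$ (last column of $Q_m^TP_m=\bm{0}$), not merely the inductive hypothesis $Q_m^TP_{m-1}=\bm{0}$.
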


\begin{proof}
We use induction on $m$.
Since $\beta_1p_1=Aq_1$ and $\gamma_1q_2=-Ap_1-\beta_1q_1$,
by the skew-symmetry of $A$ and
$\beta_1>0$, $\gamma_1>0$, we have
$$
\beta_1 q_1^T p_1=q_1^TAq_1=0 \qquad\mbox{and}\qquad
\gamma_1q_2^Tp_1=p_1^TAp_1-\beta_1 q_1^Tp_1=0,
$$
which means that $Q_2^TP_1=\bm{0}$ and thus
proves \eqref{orthUV} for $m=1$.

Suppose that \eqref{orthUV} holds for $m=1,2,\dots, j-1$,
i.e., $Q_{j}^TP_{j-1}=\bm{0}$.
For $m=j$, partition
\begin{equation}\label{orthUV0}
  Q_{j+1}^TP_j=\begin{bmatrix}Q_j^T\\q_{j+1}^T\end{bmatrix}P_j
  =\begin{bmatrix}Q_j^TP_j\\q_{j+1}^TP_j\end{bmatrix}.
\end{equation}
It follows from \eqref{LBDmat} and the
inductive hypothesis $Q_j^TP_{j-1}=\bm{0}$ that
$$
Q_j^TAQ_{j-1}=Q_j^TP_{j-1}B_{j-1}=\bm{0},
$$
which means
that $Q_{j-1}^TAQ_{j-1}=\bm{0}$ and $q_j^TAQ_{j-1}=\bm{0}$.
Since $\beta_i>0$, $i=1,\dots,j$, $B_j$ is nonsingular,
from \eqref{LBDmat} and the above relations we have
\begin{equation}\label{orthUV1}
  Q_j^TP_j=Q_j^TAQ_jB_j^{-1}
  =\begin{bmatrix}
  Q_{j-1}^TAQ_{j-1}&Q_{j-1}^TAq_j\\q_j^TAQ_{j-1}&q_j^TAq_j
  \end{bmatrix}B_{j}^{-1}=\bm{0}
\end{equation}
by noting that $q_j^TAq_j=0$.
Particularly, relation~\eqref{orthUV1} shows that
$q_j^TP_j=\bm{0}$ and $p_j^TQ_j=\bm{0}$.
Making use of them and $AP_{j-1}=-Q_{j}\widehat B_{j-1}^T$,
and noticing that $\gamma_j>0$, we obtain
\begin{eqnarray}
  \quad q_{j+1}^TP_j
  &=&\tfrac{1}{\gamma_j}(A^Tp_j-\beta_jq_j)^TP_j
  =\tfrac{1}{\gamma_j}p_j^TAP_j \nonumber\\
  &=&\tfrac{1}{\gamma_j}
  \begin{bmatrix}p_j^TAP_{j-1}&p_j^TAp_j\end{bmatrix}
  =\tfrac{1}{\gamma_j}
  \begin{bmatrix}-p_j^TQ_j\widehat B_{j-1}^T&\bm{0}\end{bmatrix}=\bm{0}.
      \label{orthUV2}
\end{eqnarray}
Applying \eqref{orthUV1} and \eqref{orthUV2} to
\eqref{orthUV0} yields \eqref{orthUV} for $m={j}$.
By induction, we have proved that relation \eqref{orthUV}
holds for all $m<\ell$.
\end{proof}

Theorem~\ref{thm3} indicates that any vectors from
$\UU_m=\Range(P_m)$ and $\VV_m=\Range(Q_m)$, which are
called left and right subspaces, are biorthogonal.
Therefore, {\em any} left and right approximate singular
vectors extracted from $\UU_m$ and $\VV_m$ are biorthogonal,
a desired property as the left and right singular vectors
of $A$ are so, too.

The SSLBD process must break down no later than $\ell$ steps
since $A$ has $\ell=n/2$ distinct singular values;
see \cite{jia2020} for a detailed analysis on the multiple
singular value case.
Once breakdown occurs for some $m<\ell$, all the singular
values of $B_m$ are exact ones of $A$, and $m$ exact
singular triplets of $A$ have been found, as we shall
see in Section~\ref{converanal}.
Furthermore, by the assumption that $A$ is nonsingular,
breakdown can occur only for $\gamma_m=0$;
for if $\beta_m=0$, then $B_m$ and thus $A$ would have
a zero singular value.

\subsection{The SSLBD method}\label{subsec:2}
With the left and right searching subspaces $\UU_m$ and $\VV_m$,
we use the standard Rayleigh--Ritz projection, i.e.,
the standard extraction approach
\cite{baglama2005augmented,berry1992large,cullum1983lanczos,
	jia2003implicitly,larsen1998lanczos,larsen2001combining},
to compute the approximate singular triplets
$\left(\theta_j,\tilde u_j,\tilde v_j\right)$
of $A$ with the unit-length vectors $\tilde u_j\in\UU_m$
and $\tilde v_j\in\VV_m$ that satisfy the requirement
\begin{equation}\label{standard}
	\left\{\begin{aligned}
		&A\tilde v_j-\theta_j\tilde u_j\perp\UU_m,\\
		&A\tilde u_j+\theta_j\tilde v_j\perp\VV_m,
	\end{aligned}\right.
	\qquad\qquad j=1,\dots,m.
\end{equation}
Set $\tilde u_j=P_mc_j$ and $\tilde v_j=Q_md_j$ for $j=1,\dots,m$.
Then \eqref{LBDmat} indicates that \eqref{standard} becomes
\begin{equation}\label{Ritz}
	B_md_j=\theta_jc_j,\qquad
	B_m^Tc_j=\theta_jd_j,\qquad
	j=1,\dots,m;
\end{equation}
that is, $(\theta_j,c_j,d_j)$, $j=1,\dots,m$ are the singular
triplets of $B_m$.
Therefore, the standard extraction amounts to computing the SVD
of $B_m$ with the singular values ordered as
$\theta_1>\theta_2>\dots>\theta_m$ and takes
$(\theta_j,\tilde u_j,\tilde v_j),\ j=1,\dots,m$ as
approximations to some singular triplets of $A$.
The resulting method is called the SSLBD
method, and the $(\theta_j,\tilde u_j, \tilde v_j)$ are called
the Ritz approximations of $A$ with respect to the left and
right searching subspaces $\UU_m$ and $\VV_m$; particularly,
the $\theta_j$ are the Ritz values, and the
$\tilde u_j$ and $\tilde v_j$ are the left and right
Ritz vectors, respectively.

Since $Q_m^TA^TAQ_m=B_m^TB_m$, by the Cauchy interlace theorem of
eigenvalues (cf. \cite[Theorem 10.1.1, pp.203]{parlett1998symmetric}),
for $j$ fixed, as $m$ increases, $\theta_j$ and $\theta_{m-j+1}$
monotonically converge to $\sigma_j$ and $\sigma_{\ell-j+1}$
from below and above, respectively.
Therefore, we can take
$(\theta_j,\tilde u_j,\tilde v_j),\ j=1,\dots,k$ as
approximations to the $k$ largest singular triplets
$(\sigma_j,u_j,v_j)$, $j=1,\dots,k$ of $A$ with $k\ll m$.
Likewise, we may use $(\theta_{m-j+1},\tilde u_{m-j+1},
\tilde v_{m-j+1})$, $j=1,\dots,k$ to approximate the $k$ smallest
singular triplets $(\sigma_{\ell-j+1},u_{\ell-j+1},v_{\ell-j+1})$,
$j=1,\dots,k$ of $A$.

\section{A convergence analysis}\label{converanal}
For later use, for each $j=1,\dots,\ell$, we denote
\begin{equation}\label{defWL2}
	\Lambda_j=\begin{bmatrix}&\sigma_j\\-\sigma_j&\end{bmatrix}
	\qquad\mbox{and}\qquad
X_j=\begin{bmatrix}u_j&v_j\end{bmatrix}.
\end{equation}
Then $AX_j=X_j\Lambda_j$, and $\XX_j:=\Range(X_j)=\spans\{x_{\pm j}\}$
is the two dimensional eigenspace of $A$ associated
with the conjugate eigenvalues $\lambda_{\pm j}$, $j=1,\dots,\ell$.
We call the pair $(\Lambda_j,X_j)$ a block eigenpair of $A$,
$j=1,\dots,\ell$. From \eqref{orthXYZ},
$\widehat X=[X_1,\dots,X_\ell]$ is orthogonal, and
\eqref{eigdecompA} can be written as
\begin{equation}\label{eigdecA2}
  A=\widehat X\widehat\Lambda\widehat X^T
\qquad\mbox{with}\qquad
\widehat\Lambda=\diag\{\Lambda_1,\dots,\Lambda_\ell\}.
\end{equation}

To make a convergence analysis on the SSLBD method,
we review some preliminaries. Let the columns of $Z$, $W$
and $W_{\perp}$ form orthonormal base of $\mathcal{Z}$,
$\mathcal{W}$ and the orthogonal complement of $\mathcal{W}$,
respectively. Denote by $\angle(\mathcal{W},\mathcal{Z})$
the canonical angles between $\mathcal{W}$ and $\mathcal{Z}$
\cite[pp.329--330]{golub2012matrix}.
The 2-norm distance between $\mathcal{Z}$ and $\mathcal{W}$
is defined by
$$
\|\sin\angle(\mathcal{W},\mathcal{Z})\|=\|W_{\perp}^HZ\|=\|(I-WW^H)Z\|,
$$
which is the sine of the largest canonical angle between
$\mathcal{W}$ and $\mathcal{Z}$ (cf. Jia and Stewart~\cite{jia2001}).
It is worthwhile to point out that if the dimensions of
$\mathcal{W}$ and $\mathcal{Z}$ are not equal then this
measure is not symmetric in its arguments.
In fact, if the dimension of $\mathcal{W}$ is greater
than that of $\mathcal{Z}$, then $\|\sin\angle(\mathcal{Z},
\mathcal{W})\|=1$, although generally
$\|\sin\angle(\mathcal{W},\mathcal{Z})\|<1$.
In this paper, we will use the $F$-norm distance
\begin{equation}\label{sinf}
\|\sin\angle(\mathcal{W},\mathcal{Z})\|_F=\|W_{\perp}^HZ\|_F,
\end{equation}
which equals the square root of
the squares sum of sines of all the canonical angles
between the two subspaces.
Correspondingly, we define
$\|\tan\angle(\mathcal{W},\mathcal{Z})\|_F$ to be
the square root of the squares sum of the tangents
of all the canonical angles between the subspaces.
These tangents are the generalized singular values of
the matrix pair $\{W_{\perp}^HZ, W^HZ\}$.

Notice that for $\mathcal{W}$ and $\mathcal{Z}$ with
{\em equal dimensions}, if $W^HZ$ is nonsingular then
the tangents of canonical angles are the singular
values of $W_{\perp}^HZ(W^HZ)^{-1}$, so that
\begin{equation}\label{tanf}
\|\tan\angle(\mathcal{W},\mathcal{Z})\|_F
=\|W_{\perp}^HZ(W^HZ)^{-1}\|_F=\|(I-WW^H)Z(W^HZ)^{-1}\|_F,
\end{equation}
which holds when the $F$-norm is replaced by the 2-norm.
We remark that the inverse in the above cannot be
replaced by the pseudoinverse $\dagger$ when $W^HZ$ is singular.
In fact, by definition, $\|\tan\angle(\mathcal{W},\mathcal{Z})\|_F$
is infinite when $W^HZ$ is singular, and the
generalized singular values of $\{W_{\perp}^HZ, W^HZ\}$
are not the singular values of $W_{\perp}^HZ(W^HZ)^{\dag}$
in this case.

Note that the real and imaginary parts of the eigenvectors
$x_{\pm j}$ of $A$ can be interchanged since $\mi x_{\pm j}$
are the eigenvectors of $A$ and its left and right singular
vectors $u_j$ and $v_j$ are the right and left ones, too
(cf. \eqref{SVDA}).
As we have pointed out, any pair of approximate left and
right singular vectors extracted from the biorthogonal
$\UU_m$ and $\VV_m$ are mutually orthogonal, using which
we construct the real and imaginary parts of an approximate
eigenvector of $A$.
As a consequence, in the SVD context of the skew-symmetric $A$,
because of these properties, when analyzing the convergence
of the SSLBD method, we consider the orthogonal direct sum
$\UU_m\oplus\VV_m$ as a whole, and estimate the distance
between a desired two dimensional eigenspace $\XX_j$ of
$A$ and the $2m$-dimensional subspace $\UU_m\oplus\VV_m$
for $j$ small. We remind that their dimensions are unequal
for $m>1$.
Using these distances and their estimates, we can establish
a priori error bounds for approximate eigenvalues and
approximate eigenspaces obtained by the SSLBD method,
showing how fast they converge when computing largest
eigenvalues in magnitudes and the associated eigenvectors.

In terms of the definition of
$\|\tan\angle(\mathcal{W},\mathcal{Z})\|_F$,
we present the following estimate for
$\|\tan\angle(\UU_m\oplus\VV_m,\mathcal{X}_j)\|_F$.

\begin{theoremmy}\label{thm4}
Let $\UU_m$ and $\VV_m$ be defined by \eqref{defUV},
and suppose that $X_j^HY$ with the initial
$Y=[p_1,q_1]$ is nonsingular.
Then the following estimate holds for any integer $1\leq j<m$:
\begin{equation}\label{accUV}
\|\tan\angle(\UU_m\oplus\VV_m,\mathcal{X}_j)\|_F\leq
\frac{\eta_j}{\chi_{m-j}(\xi_j)}\|\tan\angle(\mathcal{Y},\mathcal{X}_j)\|_F,
\end{equation}
where $\chi_{i}(\cdot)$ is the degree $i$ Chebyshev
polynomial of the first kind and
\begin{equation}\label{gammarho}
  \xi_j=1+2\cdot\frac{\sigma_j^2-\sigma_{j+1}^2}
  {\sigma^2_{j+1}-\sigma^2_{\ell}}
  \qquad\mbox{and}\qquad
  \eta_j=\left\{\begin{aligned}
  &1,\qquad \quad\qquad\qquad\mbox{if} \quad  j=1,\\
  &\prod\limits_{i=1}^{j-1}\frac{\sigma_i^2-\sigma_\ell^2}
  {\sigma_i^2-\sigma_j^2}, \hspace{2.4em}\mbox{if}\quad j>1.
  \end{aligned}\right.
\end{equation}
\end{theoremmy}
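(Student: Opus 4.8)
The plan is to recast \eqref{accUV} as a classical Chebyshev-type approximation estimate for the symmetric negative definite matrix $A^{2}$, handling the dimension mismatch ($\dim(\UU_m\oplus\VV_m)=2m$ against $\dim\XX_j=2$) by comparing $\XX_j$ with a cleverly chosen two-dimensional subspace sitting inside $\UU_m\oplus\VV_m$. First I would record the spectral structure of $A^{2}$: from \eqref{eigdecA2} and $\Lambda_i^{2}=-\sigma_i^{2}I_{2}$ we get $A^{2}=-\sum_{i=1}^{\ell}\sigma_i^{2}X_iX_i^{T}$ with $X_i^{T}X_{i'}=\delta_{ii'}I_{2}$ and $\sum_i X_iX_i^{T}=I_{n}$, so that $\phi(A^{2})=\sum_i\phi(-\sigma_i^{2})X_iX_i^{T}$ for every polynomial $\phi$. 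Expanding the starting block $Y=[p_1,q_1]=\sum_i X_iY_i$ with $Y_i=X_i^{T}Y\in\mathbb{R}^{2\times2}$, the hypothesis is exactly that $Y_j$ is invertible, and combining this with the equidimensional identity \eqref{tanf} (using the orthonormal basis $X_j$ of $\XX_j$ from \eqref{orthXYZ}) gives the formula $\|\tan\angle(\mathcal{Y},\XX_j)\|_{F}^{2}=\sum_{i\neq j}\|Y_iY_j^{-1}\|_{F}^{2}$.

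Next I would, for each polynomial $\phi$ of degree at most $m-1$ with $\phi(-\sigma_j^{2})\neq0$, set $M=\phi(-\sigma_j^{2})^{-1}Y_j^{-1}$ and $G=\phi(A^{2})YM\in\mathbb{R}^{n\times2}$. Every column of $G$ is an $\mathbb{R}$-combination of $\phi(A^{2})p_1\in\UU_m$ and $\phi(A^{2})q_1\in\VV_m$, hence $\Range(G)\subseteq\UU_m\oplus\VV_m$, while $X_j^{T}G=\phi(-\sigma_j^{2})Y_jM=I_{2}$, so $\Range(G)$ is two-dimensional. Using that $\|\tan\angle(\,\cdot\,,\XX_j)\|_{F}$ is monotone non-increasing when its first argument is enlarged (which follows from $P_{\mathcal W'}\succeq P_{\mathcal W}$ for $\mathcal W\subseteq\mathcal W'$ together with Weyl's inequalities for the singular values of $(I-P_{\mathcal W})X_j$), and that \eqref{tanf} applied to the two equidimensional spaces $\Range(G)$ and $\XX_j$ collapses, because $X_j^{T}G=I_2$, to $\|\tan\angle(\Range(G),\XX_j)\|_{F}=\|G-X_j\|_{F}$, I obtain $\|\tan\angle(\UU_m\oplus\VV_m,\XX_j)\|_{F}\le\|G-X_j\|_{F}$. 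In $G=\sum_i\phi(-\sigma_i^{2})X_iY_iM$ the $i=j$ term equals $X_j$ and cancels, so $G-X_j=\sum_{i\neq j}\tfrac{\phi(-\sigma_i^{2})}{\phi(-\sigma_j^{2})}X_iY_iY_j^{-1}$, and by orthonormality of the $X_i$ and the first step,
\[
\|\tan\angle(\UU_m\oplus\VV_m,\XX_j)\|_{F}\;\le\;\max_{i\neq j}\Big|\frac{\phi(-\sigma_i^{2})}{\phi(-\sigma_j^{2})}\Big|\;\|\tan\angle(\mathcal{Y},\XX_j)\|_{F}.
\]

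It then remains to pick $\phi$ minimizing the scalar factor. I would take
\[
\phi(t)=\prod_{k=1}^{j-1}(t+\sigma_k^{2})\cdot\chi_{m-j}\!\Big(\frac{2t+\sigma_{j+1}^{2}+\sigma_\ell^{2}}{\sigma_{j+1}^{2}-\sigma_\ell^{2}}\Big),
\]
a polynomial of degree $(j-1)+(m-j)=m-1$. The leading product annihilates $\phi$ at $t=-\sigma_i^{2}$ for every $i<j$, so only indices $i>j$ matter; for those, $-\sigma_i^{2}\in[-\sigma_{j+1}^{2},-\sigma_\ell^{2}]$ forces the Chebyshev factor to have absolute value at most $1$, while $\prod_{k<j}(\sigma_k^{2}-\sigma_i^{2})\le\prod_{k<j}(\sigma_k^{2}-\sigma_\ell^{2})$ since $\sigma_i^{2}\ge\sigma_\ell^{2}$. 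At $t=-\sigma_j^{2}$ the affine argument equals $-\xi_j$ with $\xi_j>1$ (a direct computation matching \eqref{gammarho}), so the Chebyshev factor there has absolute value $\chi_{m-j}(\xi_j)\ge1$. Putting the pieces together gives $\max_{i\neq j}|\phi(-\sigma_i^{2})/\phi(-\sigma_j^{2})|\le\eta_j/\chi_{m-j}(\xi_j)$ with $\eta_j$ as in \eqref{gammarho}, and substituting into the displayed inequality yields \eqref{accUV}.

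I expect the main obstacle to be the rigorous treatment of the unequal dimensions in the middle step, i.e.\ justifying $\|\tan\angle(\UU_m\oplus\VV_m,\XX_j)\|_{F}\le\|G-X_j\|_{F}$: one has to establish that the $F$-norm tangent distance to the fixed plane $\XX_j$ does not increase when the containing subspace grows, and then reduce the genuinely $2m$-dimensional left-hand side to the equidimensional comparison between $\Range(G)$ and $\XX_j$, where \eqref{tanf} applies and the normalization $X_j^{T}G=I_{2}$ makes it collapse to $\|G-X_j\|_{F}$. Everything else — the spectral expansion of $A^{2}$, the cancellation of the $i=j$ term, and the Chebyshev estimate with its sign bookkeeping ($-\sigma_j^{2}$ landing at $-\xi_j<-1$ and $\chi_{m-j}$ being even or odd) — is routine.
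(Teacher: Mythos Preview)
Your proposal is correct and follows essentially the same route as the paper: expand the starting block $Y$ in the eigenbasis of $A^{2}$, push it through a polynomial $\phi(A^{2})$ to obtain a two-dimensional subspace of $\UU_m\oplus\VV_m$, bound the $F$-norm tangent via \eqref{tanf} (both you and the paper silently use the symmetry of canonical angles for equidimensional subspaces here), and then optimize over $\phi$. The only cosmetic differences are that you normalize the filtered block so that $X_j^{T}G=I_2$ whereas the paper carries $(X_j^{T}Y_\rho)^{-1}$ explicitly, and you spell out the Chebyshev polynomial and verify the bound $\eta_j/\chi_{m-j}(\xi_j)$ by hand whereas the paper simply cites Saad's Theorem~1; neither changes the substance of the argument.
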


\begin{proof}
For a fixed $1\leq j< m$, denote by
$\widehat \Lambda_j=\diag\{\Lambda_1,\dots,
\Lambda_{j-1},\Lambda_{j+1},\dots,\Lambda_\ell\}$
and $\widehat X_j=\![X_1,\dots,X_{j-1},X_{j+1},\dots,X_\ell]$
which delete $\Lambda_j$ and $X_j$ from $\widehat X$
and $\widehat\Lambda$ in \eqref{eigdecA2}, respectively.
Then relation \eqref{eigdecA2} can be written as
\begin{equation}\label{eigdecA3}
  A=\widehat X_j\widehat \Lambda_j\widehat X_j^T+X_j\Lambda_jX_j^T.
\end{equation}

Notice that $Y=[p_1,q_1]$ is orthonormal by Theorem~\ref{thm3},
and $\mathcal{Y}=\Range(Y)=\UU_1\oplus\VV_1$.
Since $\widehat X$ in \eqref{eigdecA2} is orthogonal,
there exists an orthonormal matrix $G=[G_1^T,\dots,G_\ell^T]^T$ with
$G_1,\dots,G_\ell\in\mathbb{R}^{2\times 2}$ such that
\begin{equation}\label{expv1}
  Y=\widehat XG=\widehat X_j\widehat G_j+X_jG_j
  \qquad\mbox{with}\qquad
  \widehat G_j=[G_1,\dots,G_{j-1},G_{j+1},\dots,G_\ell].
\end{equation}
For an arbitrary polynomial $\rho(\cdot)$ in $\mathcal{P}^{m-1}$,
the set of polynomials of degree no more than $m-1$ that satisfies
$\rho(-\sigma_j^2)\neq0$, write $Y_{\rho}=\rho(A^2)Y$
and $\YY_{\rho}=\Range(Y_{\rho})$.
Then
\begin{equation}\label{yrho}
Y_{\rho}=\widehat X_j\rho(\widehat \Lambda_j^2)\widehat G_j
+X_j\rho(\Lambda_j^2)G_j.
\end{equation}
From \eqref{defWL2}, it is known that
$\Lambda_i^2=-\sigma_i^2I_2$, $i=1,\dots,\ell$.

Since $G_j=X_j^HY$ is nonsingular, $\rho(-\sigma_j^2)\neq0$,
and the columns of $Y_{\rho}(Y_{\rho}^TY_{\rho})^{-1/2}$
form an orthonormal basis of $\mathcal{Y}_{\rho}$,
by definition~\eqref{tanf} and relation~\eqref{yrho}, we obtain
\begin{eqnarray}
\|\tan\angle(\mathcal{Y}_{\rho},\mathcal{X}_j)\|_F
&=&\|(I-X_jX_j^T)Y_{\rho}(Y_{\rho}^TY_{\rho})^{-1/2}(X_j^T
Y_{\rho}(Y_{\rho}^TY_{\rho})^{-1/2})^{-1}\|_F  \nonumber\\
&=&\|(I-X_jX_j^T)Y_{\rho}(X_j^TY_{\rho})^{-1}\|_F
=\|\rho(\widehat \Lambda_j^2)\widehat G_j (\rho(\Lambda_j^2)G_j)^{-1}\|_F
\nonumber\\
&\leq&\frac{\|\rho(\widehat \Lambda_j^2)\|}{|\rho(-\sigma_j^2)|}
\|\widehat G_jG_j^{-1}\|_F
=\frac{\max_{i\neq j}|\rho(-\sigma_i^2)|}{|\rho(-\sigma_j^2)|}
\|\widehat G_jG_j^{-1}\|_F \nonumber\\
&=&\frac{\max_{i\neq j}|\rho(-\sigma_i^2)|}{|\rho(-\sigma_j^2)|}
\|(I-X_jX_j^T)Y(X_j^TY)^{-1}\|_F
\qquad\qquad\ \ \mbox{by \eqref{expv1}}\hspace{-4em}
\nonumber \\
&=&\frac{\max_{i\neq j}|\rho(-\sigma_i^2)|}{|\rho(-\sigma_j^2)|}
\|\tan\angle(\mathcal{Y},\mathcal{X}_j)\|_F
\label{tanpsi}.
\end{eqnarray}

Note that $\YY_{\rho}\subset \UU_m\oplus\VV_m$ for any
$\rho\in\PP^{m-1}$.
By definition and \eqref{tanpsi}, it holds that
\begin{eqnarray*}
\|\tan\angle(\UU_m\oplus\VV_m,\mathcal{X}_j)\|_F&\leq&
\min_{\rho\in\mathcal{P}^{m-1}}
\|\tan\angle(\mathcal{Y}_{\rho},\mathcal{X}_j)\|_F \\
&\leq&
\min\limits_{\rho\in\mathcal{P}^{m-1}}
\frac{\max_{i\neq j}|\rho(-\sigma_i^2)|}{|\rho(-\sigma_j^2)|}
\|\tan\angle(\mathcal{Y},\mathcal{X}_j)\|_F \\
&=&
\min\limits_{\rho\in\mathcal{P}^{m-1}, \rho(-\sigma_j^2)=1}
\max_{i\neq j}|\rho(-\sigma_i^2)|
\|\tan\angle(\mathcal{Y},\mathcal{X}_j)\|_F\\
&\leq&
\frac{\eta_j}{\chi_{m-j}(\xi_j)}
\|\tan\angle(\mathcal{Y},\mathcal{X}_j)\|_F,
\end{eqnarray*}
where the last inequality comes from the proof of Theorem~1
in \cite{saad1980rates}, $\chi_{m-j}(\cdot)$ is the degree $m-j$
Chebyshev polynomial of the first kind, and $\xi_j$ and $\eta_j$
are defined by \eqref{gammarho}.
\end{proof}

Theorem~\ref{thm4} establishes accuracy estimates for
$\mathcal{X}_j$ approximating the searching subspaces $\UU_m\oplus\VV_m$.
But one must be aware that they are only significant for $j\ll m$.
The scalars $\eta_j\geq 1$ and $\xi_j>1$ defined by \eqref{gammarho}
are constants depending only on the eigenvalue or singular value
distribution of $A$.
For a fixed integer $j\ll m$, as long as the initial searching subspace
$\mathcal{Y}$ contains some information on $\XX_j$,
that is, $\|\tan\angle(\mathcal{Y},\XX_j)\|_F$ is finite in \eqref{accUV},
then the larger $m$ is,
the smaller $\frac{\eta_j}{\chi_{m-j}(\xi_j)}$ is and the
closer $\XX_j$ is to $\UU_m\oplus\VV_m$.
Moreover, the better the singular value $\sigma_j$ is separated
from the other singular values $\sigma_i\neq\sigma_j$ of $A$,
the smaller $\eta_j$ and the larger $\xi_j$ are, meaning that $\XX_j$
approaches $\UU_m\oplus\VV_m$ more quickly as $m$ increases.
Generally, $\|\tan\angle(\UU_m\oplus\VV_m,\XX_j)\|_F$
decays faster for $j$ smaller.
Two extreme cases are that $\|\tan\angle(\mathcal{Y},\XX_j)\|_F=0$
and $\|\tan\angle(\mathcal{Y},\XX_j)\|_F=+\infty$.
In the first case, Algorithm~\ref{alg1} breaks down at step $m=1$,
and we already have the exact eigenspace $\XX_j=\mathcal{Y}$.
The second case indicates that the initial $\mathcal{Y}$ is deficient
in $\XX_j$, such that $\UU_m\oplus\VV_m$ does not contain any
information on $\XX_j$; consequently, one cannot
find any meaningful approximations to $u_j$ and $v_j$ from $\UU_m$ and
$\VV_m$ for any $m<\ell$.

Theorem~\ref{thm4} shows that the bounds tend to zero for $j$
small as $m$ increases. Using a
similar proof, we can establish the following analogue of \eqref{accUV}
for $j$ small:
\begin{equation}\label{accUVs}
\|\tan\angle(\UU_m\oplus\VV_m,\mathcal{X}_{\ell-j+1})\|_F\leq
\frac{\hat{\eta}_j}{\chi_{m-j}(\hat{\xi}_j)}\|\tan\angle(\mathcal{Y},
\mathcal{X}_{\ell-j+1})\|_F,
\end{equation}
where
$$
\hat{\xi}_j=1+2\cdot\frac{\sigma_{\ell-j+1}^2-
	\sigma_{\ell-j}^2}{\sigma^2_{\ell-j}-\sigma^2_1}
\qquad\mbox{and}\qquad
\hat{\eta}_j=\left\{\begin{aligned}
	&1,\qquad \quad\qquad\qquad\ \ \ \mbox{if} \quad  j=1,\\
	&\prod\limits_{i=1}^{j-1}\frac{\sigma_{\ell-i}^2-\sigma_1^2}
	{\sigma_{\ell-i}^2-\sigma_{\ell-j}^2},
	\hspace{1.85em}\mbox{if}\quad j>1.
\end{aligned}\right.	
$$
Similar arguments show that
$\|\tan\angle(\UU_m\oplus\VV_m,\mathcal{X}_{\ell-j+1})\|_F$ generally
tends to zero faster for $j$ smaller as $m$ increases. It indicates
that $\UU_m\oplus\VV_m$ also favors the eigenvectors of $A$ corresponding
to several smallest eigenvalues in magnitude if the
smallest singular values $\sigma_{\ell-j+1}$ are not clustered.
In the sequel, for brevity, we only discuss the computation of the largest
conjugate eigenvalues in magnitude and the associated eigenvectors of $A$.

In term of $\|\tan\angle(\UU_m\oplus\VV_m,\mathcal{X}_j)\|_F$,
we next present a priori accuracy estimates for the Ritz approximations
computed by the SSLBD method.
To this end, we first establish the following two lemmas.

\begin{lemmamy}\label{lemma1}
Let $\mathscr{Q}_m$ be the orthogonal projector onto the subspace
$\UU_m\oplus\VV_m$. Then
	\begin{equation}\label{defThetajZj}
		(\Theta_j,Z_j)=\left(\begin{bmatrix}&\theta_j\\-\theta_j&\end{bmatrix},
		\begin{bmatrix}\tilde u_j& \tilde v_j\end{bmatrix}\right),\qquad
  j=1,2,\ldots,m
	\end{equation}
	are the block eigenpairs of the linear operator $\QQ_mA\QQ_m$
restricted to $\UU_m\oplus\VV_m$:
	\begin{equation}\label{blockeigen}
		\QQ_mA\QQ_mZ_j=Z_j\Theta_j.
	\end{equation}
\end{lemmamy}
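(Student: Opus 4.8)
The plan is to verify directly that the pair $(\Theta_j,Z_j)$ defined in \eqref{defThetajZj} satisfies \eqref{blockeigen}, working in the coordinate representation afforded by the SSLBD decomposition \eqref{LBDmat}. First I would record that $\mathscr{Q}_m=P_mP_m^T+Q_mQ_m^T$, which is a genuine orthogonal projector because Theorem~\ref{thm3} guarantees the columns of $[P_m,Q_m]$ are orthonormal (each block is orthonormal and $Q_m^TP_m=\bm{0}$), so $\UU_m\oplus\VV_m$ is indeed an orthogonal direct sum of dimension $2m$. Next I would express the action of $\mathscr{Q}_mA\mathscr{Q}_m$ on the basis $[P_m,Q_m]$: using \eqref{LBDmat}, one has $AQ_m=P_mB_m$ and $AP_m=-Q_{m+1}\widehat B_m^T=-Q_mB_m^T-\gamma_mq_{m+1}e_{m,m}^T$, hence premultiplying by $\mathscr{Q}_m$ and using $Q_m^Tq_{m+1}=0$ and $P_m^Tq_{m+1}=0$ (the latter from \eqref{orthUV}) kills the residual term, giving
\begin{equation*}
\mathscr{Q}_mA[P_m,Q_m]=[P_m,Q_m]\begin{bmatrix}\bm{0}&B_m\\-B_m^T&\bm{0}\end{bmatrix}.
\end{equation*}
So relative to the orthonormal basis $[P_m,Q_m]$, the operator $\mathscr{Q}_mA\mathscr{Q}_m$ restricted to $\UU_m\oplus\VV_m$ is represented by the $2m\times2m$ matrix $C_m=\bsmallmatrix{\bm{0}&B_m\\-B_m^T&\bm{0}}$.

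Then I would pull back \eqref{blockeigen} to this coordinate system. Writing $\tilde u_j=P_mc_j$ and $\tilde v_j=Q_md_j$ with $(\theta_j,c_j,d_j)$ the singular triplets of $B_m$ as in \eqref{Ritz}, the vector $Z_j$ corresponds in the $[P_m,Q_m]$ basis to the two columns $[c_j^T,\bm{0}]^T$ and $[\bm{0},d_j^T]^T$. Applying $C_m$ to these and using the singular-value relations $B_md_j=\theta_jc_j$ and $B_m^Tc_j=\theta_jd_j$ from \eqref{Ritz} gives
\begin{equation*}
C_m\begin{bmatrix}c_j\\\bm{0}\end{bmatrix}=\begin{bmatrix}\bm{0}\\-B_m^Tc_j\end{bmatrix}=\begin{bmatrix}\bm{0}\\-\theta_jd_j\end{bmatrix},\qquad
C_m\begin{bmatrix}\bm{0}\\d_j\end{bmatrix}=\begin{bmatrix}B_md_j\\\bm{0}\end{bmatrix}=\begin{bmatrix}\theta_jc_j\\\bm{0}\end{bmatrix},
\end{equation*}
which is exactly $C_m[\,[c_j^T,\bm0]^T,\ [\bm0,d_j^T]^T\,]=[\,[c_j^T,\bm0]^T,\ [\bm0,d_j^T]^T\,]\Theta_j$ with $\Theta_j=\bsmallmatrix{&\theta_j\\-\theta_j&}$. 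Translating back through the basis $[P_m,Q_m]$ yields $\mathscr{Q}_mA\mathscr{Q}_mZ_j=Z_j\Theta_j$, i.e.\ \eqref{blockeigen}. A final remark I would add is that $Z_j$ has orthonormal columns — $\tilde u_j^T\tilde u_j=c_j^Tc_j=1$, $\tilde v_j^T\tilde v_j=d_j^Td_j=1$, and $\tilde u_j^T\tilde v_j=c_j^TQ_m^TP_md_j=0$ by \eqref{orthUV} — so $(\Theta_j,Z_j)$ is a legitimate block eigenpair in the sense of \eqref{defWL2}.

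The only mild subtlety, and the one place I would be careful, is the bookkeeping between the abstract operator $\mathscr{Q}_mA\mathscr{Q}_m$ on $\mathbb{R}^n$ and its matrix representation $C_m$ on the $2m$-dimensional subspace: one must check that the cross terms $P_m^TAP_m$ and $Q_m^TAQ_m$ vanish (they do, since $q_j^TAq_j=0$ and $p_j^TAp_j=0$ by skew-symmetry, consistent with the zero diagonal blocks of $C_m$) and that the off-block-diagonal residual $-\gamma_m q_{m+1}e_{m,m}^T$ is annihilated by $\mathscr{Q}_m$, which is precisely where the biorthogonality $Q_{m+1}^TP_m=\bm{0}$ of Theorem~\ref{thm3} is essential — without it $P_m^Tq_{m+1}$ need not be zero and the clean block structure would fail. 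Everything else is routine substitution.
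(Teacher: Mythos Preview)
Your proof is correct and follows essentially the same approach as the paper: both represent $\QQ_mA\QQ_m$ in the orthonormal basis $[P_m,Q_m]$ via the SSLBD relations \eqref{LBDmat} and Theorem~\ref{thm3}, obtain the block matrix $\bsmallmatrix{\bm{0}&B_m\\-B_m^T&\bm{0}}$, and then read off \eqref{blockeigen} from the singular-triplet relations \eqref{Ritz}. One minor remark: your closing justification that $P_m^TAP_m$ and $Q_m^TAQ_m$ vanish ``since $q_j^TAq_j=0$ and $p_j^TAp_j=0$ by skew-symmetry'' only accounts for the diagonals of those blocks, but this is moot because your main derivation already establishes the full block structure directly from $AQ_m=P_mB_m$ and $AP_m=-Q_{m+1}\widehat B_m^T$ together with biorthogonality.
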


\begin{proof}	
Since the columns of $[P_m,Q_m]$ form an orthonormal basis of $\UU_m\oplus\VV_m$,
the orthogonal projector onto $\UU_m\oplus\VV_m$ is $\QQ_m=[P_m,Q_m][P_m,Q_m]^T$.
By $\tilde u_j=P_mc_j$ and $\tilde v_j=Q_md_j$, we have
$$
Z_j=\begin{bmatrix}\tilde u_j& \tilde v_j\end{bmatrix}
   =\begin{bmatrix}P_m& Q_m\end{bmatrix}
    \begin{bmatrix}c_j&\\& d_j\end{bmatrix}.
$$
It is known from the proof of Theorem~\ref{thm3}
that $P_m^TAP_m=Q_m^TAQ_m=\bm{0}$. Making use of that and
$P_m^TAQ_m=B_m$, $Q_m^TAP_m=-B_m^T$ as well as \eqref{Ritz},
by $\QQ_mZ_j=Z_j$, we obtain
$$
\QQ_mA\QQ_mZ_j=\begin{bmatrix}P_m \!\!\!&\!\!\! Q_m\end{bmatrix}
\begin{bmatrix}& \!\!\!B_m\\-B^T_m\!\!\!&\end{bmatrix}
\begin{bmatrix}c_j\!\!\!&\\&\!\!\! d_j\end{bmatrix}
=\begin{bmatrix}P_m \!\!\!&\!\!\! Q_m\end{bmatrix}
\begin{bmatrix}c_j\!\!\!&\\&\!\!\! d_j\end{bmatrix}
\begin{bmatrix}&\!\!\! \theta_j\\-\theta_j\!\!\!&\end{bmatrix}
=Z_j\Theta_j.\qedhere
$$
\end{proof}

\begin{lemmamy}\label{lemma2}
With the notations of Lemma~\ref{lemma1}, for an arbitrary
$E\in\mathbb{R}^{2\times 2}$, it holds that
\begin{equation}\label{gapthetasigma}
	\qquad
	\|\Theta_jE-E\Lambda_i\|_F \geq |\theta_j-\sigma_i|\|E\|_F,
\end{equation}	
where $\Theta_j, j=1,\dots,m$ and $\Lambda_i,i=1,\dots,\ell$
are defined by \eqref{defThetajZj} and \eqref{defWL2}, respectively.
\end{lemmamy}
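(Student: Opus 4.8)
The plan is to regard the left-hand side of \eqref{gapthetasigma} as the image of $E$ under the Sylvester-type linear operator $\mathcal{S}_{ji}\colon\mathbb{R}^{2\times 2}\to\mathbb{R}^{2\times 2}$ defined by $\mathcal{S}_{ji}(E)=\Theta_jE-E\Lambda_i$, and to show that its smallest singular value with respect to the Frobenius inner product $\langle X,Y\rangle=\trace(X^TY)$ equals $|\theta_j-\sigma_i|$; then \eqref{gapthetasigma} is immediate from $\|\mathcal{S}_{ji}(E)\|_F\ge\sigma_{\min}(\mathcal{S}_{ji})\|E\|_F$ for all $E$. The first step is to record that $\Theta_j$ and $\Lambda_i$ are $2\times 2$ real skew-symmetric matrices, hence normal, with eigenvalues $\{\mi\theta_j,-\mi\theta_j\}$ and $\{\mi\sigma_i,-\mi\sigma_i\}$ respectively, where $\theta_j>0$ because $B_m$ is nonsingular and $\sigma_i>0$ by \eqref{labeleig}.

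Next I would compute adjoints with respect to $\langle\cdot,\cdot\rangle$: the adjoint of $E\mapsto\Theta_jE$ is $E\mapsto\Theta_j^TE=-\Theta_jE$ and the adjoint of $E\mapsto E\Lambda_i$ is $E\mapsto E\Lambda_i^T=-E\Lambda_i$, so $\mathcal{S}_{ji}^{*}=-\mathcal{S}_{ji}$; thus $\mathcal{S}_{ji}$ is skew-adjoint, in particular normal, and its singular values are the moduli of its eigenvalues. Since the eigenvalues of a Sylvester operator $E\mapsto\Theta_jE-E\Lambda_i$ are exactly the differences $\mu-\nu$ with $\mu$ an eigenvalue of $\Theta_j$ and $\nu$ an eigenvalue of $\Lambda_i$, these are $\pm\mi(\theta_j-\sigma_i)$ and $\pm\mi(\theta_j+\sigma_i)$, with moduli $|\theta_j-\sigma_i|$ (twice) and $\theta_j+\sigma_i$ (twice). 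Hence $\sigma_{\min}(\mathcal{S}_{ji})=\min\{|\theta_j-\sigma_i|,\,\theta_j+\sigma_i\}=|\theta_j-\sigma_i|$, which completes the proof.

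An entirely elementary alternative, avoiding the spectral theory of Sylvester operators, is a direct $2\times2$ computation: writing $E=\bsmallmatrix{a&b\\c&d}$, one obtains
\[\|\Theta_jE-E\Lambda_i\|_F^2=(\theta_j^2+\sigma_i^2)(a^2+b^2+c^2+d^2)-4\theta_j\sigma_i ad+4\theta_j\sigma_i bc,\]
and then the bounds $-4\theta_j\sigma_i ad\ge-2\theta_j\sigma_i(a^2+d^2)$ and $4\theta_j\sigma_i bc\ge-2\theta_j\sigma_i(b^2+c^2)$, both instances of $2xy\le x^2+y^2$, yield the lower bound $(\theta_j-\sigma_i)^2\|E\|_F^2$. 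I expect the only delicate point in either route to be the verification that the operator truly attains its smallest amplification at the value $|\theta_j-\sigma_i|$ and not at something smaller: in the first route this is guaranteed by normality (so that singular values coincide with eigenvalue moduli), and in the second route by the sharpness of the arithmetic–geometric mean step. That verification, rather than any lengthy algebra, is the crux.
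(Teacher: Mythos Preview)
Your second, elementary route is essentially the paper's own proof. The paper writes $\Theta_j=\theta_j\widehat I_2$ and $\Lambda_i=\sigma_i\widehat I_2$ with $\widehat I_2=\bsmallmatrix{&1\\-1&}$, expands
\[
\|\Theta_jE-E\Lambda_i\|_F^2=(\theta_j^2+\sigma_i^2)\|E\|_F^2-2\theta_j\sigma_i\,\trace(\widehat I_2^TE^T\widehat I_2E),
\]
computes the trace as $2e_{11}e_{22}-2e_{12}e_{21}$, and bounds it by $\|E\|_F^2$ via $2xy\le x^2+y^2$; this is exactly your entrywise calculation repackaged through the trace.

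Your first route, via the Sylvester operator $\mathcal S_{ji}$, is correct and genuinely different. The key observations---that $\mathcal S_{ji}^*=-\mathcal S_{ji}$ because $\Theta_j,\Lambda_i$ are skew-symmetric, hence $\mathcal S_{ji}$ is normal, and that its eigenvalues are the differences $\{\pm\mi\theta_j\}-\{\pm\mi\sigma_i\}$---are all valid and yield $\sigma_{\min}(\mathcal S_{ji})=|\theta_j-\sigma_i|$ immediately. This approach buys you more: you simultaneously obtain the sharp upper bound $\|\Theta_jE-E\Lambda_i\|_F\le(\theta_j+\sigma_i)\|E\|_F$, and the argument generalizes without change to larger skew-symmetric blocks. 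The paper's direct computation, by contrast, is entirely self-contained and does not rely on the (standard but nontrivial) spectral description of Sylvester operators, which makes it more accessible in context.
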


\begin{proof}
Since $\Theta_j=\theta_j\widehat I_2$ and
$\Lambda_i=\sigma_i\widehat I_2$
with $\widehat I_2=\bsmallmatrix{&1\\-1&}$,
by the fact that $\|\widehat I_2E\|_F=\|E\|_F$
and $\|E\widehat I_2\|_F=\|E\|_F$, we have
\begin{eqnarray}\label{gapthetasigma1}
\|\Theta_jE-E\Lambda_i\|_F&=&\sqrt{
\|\Theta_jE\|_F^2+\|E\Lambda_i\|_F^2-
2\trace(\Lambda_i^TE^T\Theta_jE)} \nonumber \\
&=&\sqrt{ \theta_j^2\|E\|_F^2+\sigma_i^2\|E\|_F^2-
2\theta_j\sigma_i\trace(\widehat I_2^TE^T\widehat I_2E)}.
\end{eqnarray}
Denote $E=\begin{bmatrix}e_{11}&e_{12}\\e_{21}&e_{22}\end{bmatrix}$.
Then the trace
\begin{equation*}
\trace(\widehat I_2^TE^T\widehat I_2E)=2e_{11}e_{22}-2e_{12}e_{21}\leq e_{11}^2+e_{22}^2+e_{12}^2+e_{21}^2 =\|E\|_F^2,
\end{equation*}
applying which to \eqref{gapthetasigma1} gives
\begin{equation}\label{gapthetasigma2}
\|\Theta_jE-E\Lambda_i\|_F\geq  \sqrt{\theta_j^2\|E\|_F^2+\sigma_i^2\|E\|_F^2-2\theta_j\sigma_i\|E\|_F^2}=
|\theta_j-\sigma_i|\|E\|_F. \qedhere
\end{equation}
\end{proof}

Making use of Lemmas~\ref{lemma1}--\ref{lemma2},
we are now in a position to establish a priori
accuracy estimates for the Ritz approximations.

\begin{theoremmy}\label{thm4plus}
For $(\sigma_j,u_j,v_j),j=1,\dots,\ell$, assume that
$(\theta_{j^{\prime}},\tilde u_{j^{\prime}},\tilde v_{j^{\prime}})$
is a Ritz approximation to the desired $(\sigma_j,u_j,v_j)$
where $\theta_{j^{\prime}}$ is the closest to $\sigma_j$
among the Ritz values.
Denote $\XX_j=\spans\{u_j,v_j\}$ and
$\ZZ_{j^{\prime}}=\spans\{\tilde u_{j^{\prime}},
\tilde v_{j^{\prime}}\}$. Then
	\begin{equation}\label{accuv}
	\|\sin\angle(\ZZ_{j^{\prime}},\XX_j)\|_F\leq
	\sqrt{1+\frac{\|\QQ_mA(I-\QQ_m)\|^2}
		{\min_{i\neq j^{\prime}}|\sigma_j-\theta_i|^2}}
	\|\sin\angle(\UU_m\oplus\VV_m,\mathcal{X}_j)\|_F,	
	\end{equation}
where $\QQ_m$ is the orthogonal projector onto the
subspace $\UU_m\oplus\VV_m$.
\end{theoremmy}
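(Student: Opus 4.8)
The plan is to adapt the classical argument behind \cite[pp.103, Theorem 4.6]{saad2011numerical} to the two dimensional block setting, using the orthogonal projector $\QQ_m$ onto $\UU_m\oplus\VV_m$. Write $X_j=[u_j,v_j]$ so that $AX_j=X_j\Lambda_j$, set $R=\QQ_mX_j$, and abbreviate $\gamma=\|\QQ_mA(I-\QQ_m)\|$ and $\epsilon=\|\sin\angle(\UU_m\oplus\VV_m,\XX_j)\|_F=\|(I-\QQ_m)X_j\|_F$. The first step is the residual-type identity
$$
\QQ_mA\QQ_mR-R\Lambda_j=\QQ_mA\QQ_mX_j-\QQ_mAX_j=-\QQ_mA(I-\QQ_m)X_j,
$$
which uses only $AX_j=X_j\Lambda_j$ and $\QQ_m^2=\QQ_m$. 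Taking Frobenius norms and inserting the idempotent factor $I-\QQ_m$ gives $\|\QQ_mA\QQ_mR-R\Lambda_j\|_F\le\gamma\epsilon$.

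Next I would exploit the block spectral structure of $\QQ_mA\QQ_m$ from Lemma~\ref{lemma1}. Since $\tilde u_1,\dots,\tilde u_m$ and $\tilde v_1,\dots,\tilde v_m$ are orthonormal within $\UU_m$ and $\VV_m$ respectively and are mutually orthogonal by Theorem~\ref{thm3}, the $2m$ columns of $[Z_1,\dots,Z_m]$ with $Z_i=[\tilde u_i,\tilde v_i]$ form an orthonormal basis of $\UU_m\oplus\VV_m$; hence $R=\sum_{i=1}^mZ_iE_i$ with $E_i=Z_i^TX_j=Z_i^TR$ and $\|R\|_F^2=\sum_i\|E_i\|_F^2$. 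Using \eqref{blockeigen} yields $\QQ_mA\QQ_mR-R\Lambda_j=\sum_iZ_i(\Theta_iE_i-E_i\Lambda_j)$, and by orthonormality of $[Z_1,\dots,Z_m]$ its squared Frobenius norm is $\sum_i\|\Theta_iE_i-E_i\Lambda_j\|_F^2$. Applying Lemma~\ref{lemma2} term by term (with $\Theta_i=\theta_i\widehat I_2$ and $\Lambda_j=\sigma_j\widehat I_2$) then gives $\gamma^2\epsilon^2\ge\sum_{i\ne j^{\prime}}|\theta_i-\sigma_j|^2\|E_i\|_F^2\ge\delta^2\sum_{i\ne j^{\prime}}\|E_i\|_F^2$, where $\delta=\min_{i\ne j^{\prime}}|\sigma_j-\theta_i|$.

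It then remains to convert $\sum_{i\ne j^{\prime}}\|E_i\|_F^2$ into the desired sines by Pythagoras for canonical angles. From $\|(I-\QQ_m)X_j\|_F^2=2-\|R\|_F^2$ one has $\|R\|_F^2=2-\epsilon^2$, and since $\dim\ZZ_{j^{\prime}}=\dim\XX_j=2$ and $E_{j^{\prime}}=Z_{j^{\prime}}^TX_j$, one has $\|E_{j^{\prime}}\|_F^2=\|Z_{j^{\prime}}^TX_j\|_F^2=2-\|\sin\angle(\ZZ_{j^{\prime}},\XX_j)\|_F^2$. Therefore $\sum_{i\ne j^{\prime}}\|E_i\|_F^2=\|R\|_F^2-\|E_{j^{\prime}}\|_F^2=\|\sin\angle(\ZZ_{j^{\prime}},\XX_j)\|_F^2-\epsilon^2$. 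Substituting into the inequality of the previous paragraph gives $\|\sin\angle(\ZZ_{j^{\prime}},\XX_j)\|_F^2\le\epsilon^2\big(1+\gamma^2/\delta^2\big)$, and taking square roots produces \eqref{accuv}; the degenerate case $\delta=0$ is trivial since the right-hand side is then infinite.

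I expect the main care to be needed in the bookkeeping of the $F$-norm canonical-angle identities, because $\UU_m\oplus\VV_m$ and $\XX_j$ have unequal dimensions: one must keep $\epsilon$ in the form $\|(I-\QQ_m)X_j\|_F$ (the reversed sine measure equals $1$ and is useless), whereas $\ZZ_{j^{\prime}}$ and $\XX_j$ have equal dimension, so $\|\cos\angle(\ZZ_{j^{\prime}},\XX_j)\|_F^2+\|\sin\angle(\ZZ_{j^{\prime}},\XX_j)\|_F^2=2$ may be used there. One must also invoke Lemma~\ref{lemma2} with the correct pairing of the $2\times2$ blocks. Everything else is a direct consequence of Lemmas~\ref{lemma1}--\ref{lemma2} and Theorem~\ref{thm3}; in particular, unlike Theorem~\ref{thm4}, this estimate needs no nonsingularity hypothesis on $X_j^HY$.
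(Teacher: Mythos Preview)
Your proof is correct and follows essentially the same route as the paper's: split $X_j$ into its components inside and outside $\UU_m\oplus\VV_m$, further resolve the inside part against the orthonormal block basis $[Z_1,\dots,Z_m]$, bound the off-$Z_{j'}$ piece via Lemma~\ref{lemma2} and the gap $\min_{i\ne j'}|\theta_i-\sigma_j|$, and reassemble using Pythagoras for the $F$-norm canonical angles. The only cosmetic difference is that the paper introduces an auxiliary orthonormal basis $W_1$ of $\QQ_m\XX_j$ and works with the factorizations $X_j=W_1H_1+W_2H_2$, $W_1=Z_{j'}F_1+\widehat Z_{j'}F_2$, whereas you work directly with $R=\QQ_mX_j$ and the coefficients $E_i=Z_i^TX_j$; your $\sum_{i\ne j'}\|E_i\|_F^2$ is exactly the paper's $\|F_2H_1\|_F^2$, so the two computations coincide.
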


\begin{proof}
Notice that the two dimensional subspace
$\WW_j^{\prime}=\QQ_m\XX_j\subset\UU_m\oplus\VV_m$
is the orthogonal projection of $\XX_j$ onto $\UU_m\oplus\VV_m$
with respect to the $F$-norm. Therefore, by the definition of $\|\sin\angle(\UU_m\oplus\VV_m,\mathcal{X}_j)\|_F$, we have
\begin{equation}\label{tanWW1XXj}
\|\sin\angle(\WW_j^{\prime},\mathcal{X}_j)\|_F=
\|\sin\angle(\UU_m\oplus\VV_m,\mathcal{X}_j)\|_F. 	
\end{equation}

Let $W_1\in\mathbb{R}^{n\times 2}$ and
$W_2\in\mathbb{R}^{n\times(n-2m)}$
be the orthonormal basis matrices of $\WW_j^{\prime}$
and the orthogonal complement
of $\UU_m\oplus\VV_m$ with respect to $\mathbb{R}^n$,
respectively. The orthonormal basis matrix
$X_j=[u_j,v_j]$ of $\XX_j$ can be expressed as
\begin{equation}\label{decompXXj1}
		X_j=W_1H_1+W_2H_2,
	\end{equation}	
where $H_1\in\mathbb{R}^{2\times 2}$ and
$H_2\in\mathbb{R}^{(n-2m)\times2}$.
Since $AX_j=X_j\Lambda_j$, from the above relation we have
\begin{equation*}
	AW_1H_1-W_1H_1\Lambda_j=W_2H_2\Lambda_j-AW_2H_2.
\end{equation*}
Premultiplying this relation by $\QQ_m$ and taking the $F$-norms
in the two hand sides, from $\QQ_mW_1=W_1$,
$\QQ_mW_2=\bm{0}$ and $I-\QQ_m=W_2W_2^T$, we obtain
\begin{equation}\label{error2}
	\|\QQ_mAW_1H_1-W_1H_1\Lambda_j\|_F=
	\|\QQ_mAW_2H_2\|_F \leq
\|\QQ_mA(I-\QQ_m)\|\|H_2\|_F.	
\end{equation}

Recall the definition of $Z_{j^{\prime}}$ in \eqref{defThetajZj},
and denote $\widehat Z_{j^{\prime}}=[Z_1,\dots,
Z_{j^{\prime}-1},Z_{j^{\prime}+1},\dots,Z_m]$.
Note that the columns of $[Z_{j^{\prime}},\widehat Z_{j^{\prime}}]$
form an orthonormal basis of  $\UU_m\oplus\VV_m$.
Since $\WW_j^{\prime}\subset\UU_m\oplus\VV_m$, we can decompose
its orthonormal basis matrix $W_1$ into
\begin{equation}\label{decompWW1}
	W_1=Z_{j^{\prime}}F_1+\widehat Z_{j^{\prime}}F_2,
\end{equation}
where $F_1\in\mathbb{R}^{2\times 2}$ and
$F_2\in\mathbb{R}^{(2m-2)\times 2}$.
Lemma~\ref{lemma1} shows that
$\QQ_mAZ_{j^{\prime}}=Z_{j^{\prime}}\Theta_{j^{\prime}}$
and
$\QQ_mA\widehat Z_{j^{\prime}}=
\widehat Z_{j^{\prime}}\widehat\Theta_{j^{\prime}}$
with $\widehat\Theta_{j^{\prime}}=
\diag\{\Theta_1,\dots,\Theta_{j^{\prime}-1},
\Theta_{j^{\prime}+1},\dots,\Theta_m\}$.
Therefore, by~\eqref{decompWW1}, we obtain
\begin{eqnarray}
	\|\QQ_mAW_1H_1-W_1H_1\Lambda_j\|_F&=&
	\|\QQ_mAZ_{j^{\prime}}F_1H_1-Z_{j^{\prime}}F_1H_1\Lambda_j+
	\QQ_mA\widehat Z_{j^{\prime}}F_2H_1-
	\widehat Z_{j^{\prime}}F_2H_1\Lambda_j\|_F 	\nonumber \\
	&=&\|Z_{j^{\prime}}(\Theta_{j^{\prime}}F_1H_1-F_1H_1\Lambda_j)+
	\widehat Z_{j^{\prime}}(\widehat \Theta_{j^{\prime}}
F_2H_1-F_2H_1\Lambda_j)\|_F\nonumber \\
	&\geq&\|\widehat Z_{j^{\prime}}
	(\widehat \Theta_{j^{\prime}}F_2H_1-F_2H_1\Lambda_j\|_F\nonumber \\
	&=&\|\widehat \Theta_{j^{\prime}}F_2H_1-F_2H_1\Lambda_j\|_F,  \label{error3}
\end{eqnarray}
where the last two relations hold since
$[Z_{j^{\prime}},\widehat Z_{j^{\prime}}]$ is orthonormal.
Write
$$
E=F_2H_1=[E_1^T,\dots,E_{j^{\prime}-1}^T,E_{j^{\prime}+1}^T,E_{m}^T]^T
$$
with each $E_i\in\mathbb{R}^{2\times 2}$.
Then by Lemma~\ref{lemma2} we obtain
\begin{eqnarray}
	\|\widehat \Theta_{j^{\prime}}F_2H_1-F_2H_1\Lambda_j\|_F^2
	&=&\sum_{i=1,i\neq j^{\prime}}^{m}\|\Theta_{i}E_i-E_i\Lambda_j\|_F^2\geq	
	\sum_{i=1,i\neq j^{\prime}}^{m}|\theta_i-\sigma_j|^2\|E_i\|_F^2 \nonumber\\
	&\geq&
	\min_{i\neq j^{\prime}}|\theta_i-\sigma_j|^2\sum_{i=1,i\neq j^{\prime}}^{m}\|E_i\|_F^2
	=\min_{i\neq j^{\prime}}|\theta_i-\sigma_j|^2\|E\|_F^2
	\nonumber\\
	&=&\min_{i\neq j^{\prime}}|\theta_i-\sigma_j|^2\|F_2H_1\|_F^2.  \label{error4}
\end{eqnarray}
Therefore, combining \eqref{error2}, \eqref{error3} and \eqref{error4},
we obtain
\begin{equation}\label{error5}
	\|F_2H_1\|_F\leq\frac{\|\QQ_mA(I-\QQ_m)\|}
	{\min_{i\neq j^{\prime}}|\theta_i-\sigma_j|}\|H_2\|_F.
\end{equation}

Since both $X_j$ and $W_1$ are column orthonormal,
decomposition \eqref{decompXXj1} shows that
\begin{equation}\label{tanxjW1}
	\|\sin\angle(\WW_j^{\prime},\XX_j)\|_F=
	\|(I-W_1W_1^T)X_j\|_F=\|H_2\|_F.
\end{equation}
Substituting \eqref{decompWW1} into \eqref{decompXXj1} yields
\begin{equation}\label{decompXXj2}
	X_j=Z_{j^{\prime}}F_1H_1+\widehat Z_{j^{\prime}}F_2H_1+W_2H_2,	
\end{equation}
which, by the fact that the columns of
$[Z_{j^{\prime}},\widehat Z_{j^{\prime}},W_2]$
are orthonormal, means that
\begin{equation*}
\|\sin\angle(\ZZ_{j^{\prime}},\mathcal{X}_j)\|_F^2=
\|(I-Z_{j^{\prime}}Z_{j^{\prime}}^T)X_j\|_F^2=
\|\widehat Z_{j^{\prime}}F_2H_1+W_2H_2\|_F^2
=\|F_{2}H_1\|_F^2+\|H_2\|_F^2.	
\end{equation*}
Then \eqref{accuv} follows by in turn applying \eqref{error5},
\eqref{tanxjW1} and \eqref{tanWW1XXj} to the above relation.
\end{proof}

\begin{remark}
This theorem extends a classic result
(cf. \cite[pp.103, Theorem 4.6]{saad2011numerical}) on the a
priori error bound for the Ritz vector in the real symmetric
(complex Hermitian) case to the skew-symmetric case for the
Ritz block $\ZZ_{j^{\prime}}=\spans\{\tilde u_{j^{\prime}},
\tilde v_{j^{\prime}}\}$.
\end{remark}

\begin{theoremmy}\label{thm5}
With the notations of Theorem~\ref{thm4plus}, assume that the angles
\begin{equation}\label{assump}
\angle(\tilde{u}_{j^{\prime}},u_j)\leq \frac{\pi}{4}
\mbox{\ \ \ and\ \ \ }
\angle(\tilde{v}_{j^{\prime}},v_j)\leq \frac{\pi}{4}.
\end{equation}
Then
\begin{eqnarray}
|\theta_{j^{\prime}}-\sigma_j|&\leq& \sigma_j
\|\sin\angle(\ZZ_{j^{\prime}},\mathcal{X}_j)\|^2+
\frac{\sigma_1}{\sqrt{2}}\|\sin\angle(\ZZ_{j^{\prime}},\mathcal{X}_j)\|_F^2,
\label{ritzerror}\\
|\theta_{j^{\prime}}-\sigma_j|&\leq& (\sigma_1+\sigma_j)
\|\sin\angle(\ZZ_{j^{\prime}},\mathcal{X}_j)\|^2. \label{ritzerror2}
\end{eqnarray}
\end{theoremmy}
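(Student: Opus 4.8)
The plan is to express the Ritz value $\theta_{j^{\prime}}$ as a bilinear form in the Ritz vectors, expand those vectors along the desired singular vectors, and read the error off the resulting scalar identity. First I would observe that, since $\tilde u_{j^{\prime}}=P_mc_{j^{\prime}}$ and $\tilde v_{j^{\prime}}=Q_md_{j^{\prime}}$ with $\|c_{j^{\prime}}\|=1$, premultiplying $AQ_m=P_mB_m$ from \eqref{LBDmat} by $P_m^T$ gives $P_m^TAQ_m=B_m$, whence by \eqref{Ritz}
\[
\theta_{j^{\prime}}=c_{j^{\prime}}^TB_md_{j^{\prime}}=\tilde u_{j^{\prime}}^TA\tilde v_{j^{\prime}} .
\]

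Next, normalizing signs so that $\tilde u_{j^{\prime}}^Tu_j=\cos\phi$ and $\tilde v_{j^{\prime}}^Tv_j=\cos\psi$ with $\phi=\angle(\tilde u_{j^{\prime}},u_j)$, $\psi=\angle(\tilde v_{j^{\prime}},v_j)$, I would write $\tilde u_{j^{\prime}}=\cos\phi\,u_j+\sin\phi\,f_u$ and $\tilde v_{j^{\prime}}=\cos\psi\,v_j+\sin\psi\,f_v$ with unit vectors $f_u\perp u_j$, $f_v\perp v_j$, and split $f_u=a\,v_j+g_u$, $f_v=b\,u_j+g_v$ with $g_u,g_v\perp\XX_j$ and $a^2+\|g_u\|^2=b^2+\|g_v\|^2=1$. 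Substituting into the identity above and using $Av_j=\sigma_ju_j$, $Au_j=-\sigma_jv_j$ together with the skew-symmetry of $A$ — which kills the cross terms $u_j^TAf_v$ and $f_u^TAv_j$ and reduces $f_u^TAf_v$ to $-ab\,\sigma_j+g_u^TAg_v$ — yields
\[
\theta_{j^{\prime}}=\sigma_j\bigl(\cos\phi\cos\psi-ab\,\sin\phi\sin\psi\bigr)+\sin\phi\sin\psi\,g_u^TAg_v .
\]
The parenthesized factor is exactly $\det M$, where $M=Z_{j^{\prime}}^TX_j$ is the $2\times2$ cross-Gram matrix whose diagonal entries are $\cos\phi,\cos\psi$ and whose off-diagonal entries are $a\sin\phi,\,b\sin\psi$; hence
\[
\theta_{j^{\prime}}-\sigma_j=-\sigma_j\,(1-\det M)+\sin\phi\sin\psi\,g_u^TAg_v .
\]

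For the first term I would use assumption \eqref{assump}: it gives $\phi+\psi\le\pi/2$, so that $\det M\ge\cos\phi\cos\psi-\sin\phi\sin\psi=\cos(\phi+\psi)\ge0$ and $\det M\le\cos(\phi-\psi)\le1$. Thus $\det M$ equals the product $\cos\omega_1\cos\omega_2$ of the cosines of the two canonical angles $\omega_1\ge\omega_2$ between $\ZZ_{j^{\prime}}$ and $\XX_j$, and since $\cos\omega_2\ge\cos\omega_1$,
\[
0\le 1-\det M\le 1-\cos^2\omega_1=\sin^2\omega_1=\|\sin\angle(\ZZ_{j^{\prime}},\XX_j)\|^2 .
\]
For the second term, $|g_u^TAg_v|\le\|A\|\,\|g_u\|\,\|g_v\|=\sigma_1\|g_u\|\|g_v\|$, while $\sin\phi\,\|g_u\|=\|(I-X_jX_j^T)\tilde u_{j^{\prime}}\|$ and $\sin\psi\,\|g_v\|=\|(I-X_jX_j^T)\tilde v_{j^{\prime}}\|$ are the distances of $\tilde u_{j^{\prime}},\tilde v_{j^{\prime}}$ to $\XX_j$. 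Bounding the product of these two numbers by half the sum of their squares and using $\|(I-X_jX_j^T)\tilde u_{j^{\prime}}\|^2+\|(I-X_jX_j^T)\tilde v_{j^{\prime}}\|^2=\|(I-X_jX_j^T)Z_{j^{\prime}}\|_F^2=\|\sin\angle(\ZZ_{j^{\prime}},\XX_j)\|_F^2$ (the $F$-norm subspace distance being symmetric for equidimensional subspaces) gives \eqref{ritzerror} — in fact with the sharper constant $\sigma_1/2$ in place of $\sigma_1/\sqrt2$. Bounding each of the two distances instead by $\|\sin\angle(\ZZ_{j^{\prime}},\XX_j)\|$ gives \eqref{ritzerror2}.

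I expect the canonical-angle bookkeeping to be the delicate part: one must show $\det M\ge0$ — which is precisely where hypothesis \eqref{assump} is used — so that $\det M=\cos\omega_1\cos\omega_2$ rather than its negative, and one must keep straight that the $2$-norm subspace distance $\|\sin\angle(\ZZ_{j^{\prime}},\XX_j)\|$ is the sine of the \emph{larger} canonical angle. A minor side issue is the sign normalization at the outset, since the left and right Ritz vectors $\tilde u_{j^{\prime}},\tilde v_{j^{\prime}}$ can only be flipped simultaneously; but under \eqref{assump} the near-isometry $A\tilde v_{j^{\prime}}\approx\theta_{j^{\prime}}\tilde u_{j^{\prime}}$ together with $Av_j=\sigma_ju_j$ forces $(\tilde u_{j^{\prime}}^Tu_j)(\tilde v_{j^{\prime}}^Tv_j)>0$, so the two inner products can indeed be made nonnegative at once.
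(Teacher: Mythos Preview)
Your proof is correct and follows essentially the same route as the paper: decompose $Z_{j'}$ along $\XX_j$ and its orthogonal complement, identify the leading error term as $\sigma_j(1-\det M)$ with $M=Z_{j'}^TX_j$, use hypothesis \eqref{assump} to ensure $\det M\ge0$ so that $1-\det M\le\sin^2\omega_1$, and bound the remaining term by $\sigma_1$ times the squared sines. The only real difference is that the paper works with the $2\times2$ block $\Theta_{j'}-\Lambda_j=Z_{j'}^TAZ_{j'}-\Lambda_j$ and takes Frobenius norms throughout, whereas you extract the single scalar entry $\theta_{j'}=\tilde u_{j'}^TA\tilde v_{j'}$ directly; this is why your argument legitimately produces the sharper constant $\sigma_1/2$ in place of the paper's $\sigma_1/\sqrt2$ in \eqref{ritzerror}.
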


\begin{proof}
Decompose the orthonormal matrix $Z_{j^{\prime}}$ defined by
\eqref{defThetajZj} into the orthogonal direct sum
\begin{equation}\label{newexpressZj}
	Z_{j^{\prime}}=X_jH+\widehat X_j\widehat H,
\end{equation}
where $H\in\mathbb{R}^{2\times 2}$ and
$\widehat{H}\in\mathbb{R}^{(n-2)\times 2}$. Then
\begin{equation}\label{identity}
H^TH+\widehat H^T\widehat H=I_2.
\end{equation}

By \eqref{blockeigen}, \eqref{newexpressZj} and
$AX_j=X_j\Lambda_j$,
$A\widehat X_j=\widehat X_j\widehat \Lambda_j$, we obtain
\begin{equation}\label{diff}
\Theta_{j^{\prime}}-\Lambda_j=Z_{j^{\prime}}^TAZ_{j^{\prime}}-\Lambda_j
=
H^T\Lambda_{j}H
+\widehat{H}^T\widehat{\Lambda}_{j}\widehat{H}-\Lambda_j
=\sigma_j\cdot \left(H^T\widehat I_2 H-\widehat I_2\right)
+\widehat{H}^T\widehat{\Lambda}_{j}\widehat{H},	
\end{equation}
where the last equality holds since
$\Lambda_j=\sigma_j\widehat I_2$ with
$\widehat I_2 =\bsmallmatrix{&1\\-1}$.
Notice that $\Theta_{j^{\prime}}=\theta_{j^{\prime}}\widehat I_2$
and $\|\widehat I_2\|_F=\sqrt2$.
Taking the $F$-norms in the two hand sides of the above relation
and exploiting $\|\widehat \Lambda_{j}\|\leq\sigma_1$ and $\|\widehat H\|_F=\|\sin\angle(\ZZ_{j^{\prime}},\mathcal{X}_j)\|_F$ yield
\begin{eqnarray}
\sqrt{2}|\theta_{j^{\prime}}-\sigma_j|
&\leq&  \sigma_j\|H^T\widehat I_2H-\widehat I_2\|_F+
\|\widehat{\Lambda}_{j}\|\|\widehat{H}\|_F^2  \nonumber \\
&\leq& \sigma_j\|H^T\widehat I_2H-\widehat I_2\|_F+
\sigma_1 \|\sin\angle(\ZZ_{j^{\prime}},\mathcal{X}_j)\|_F^2. \label{interm1}
\end{eqnarray}

Write $H=[h_{ij}]$, and notice that
$h_{11}=\cos\angle(\tilde{u}_{j^{\prime}},u_j)$ and $h_{22}=\cos\angle(\tilde{v}_{j^{\prime}},v_j)$.
Then by \eqref{assump}
we have
\begin{equation}\label{hij}
	h_{11}\geq\frac{1}{\sqrt{2}}, \qquad
	h_{22}\geq\frac{1}{\sqrt{2}} \qquad\mbox{and}\qquad
	|h_{12}|\leq\frac{1}{\sqrt{2}}, \qquad
	|h_{21}|\leq\frac{1}{\sqrt{2}}
\end{equation}
since, from \eqref{identity}, the 2-norms of columns
$[h_{11},h_{21}]^T$ and $[h_{12},h_{22}]^T$ of $H$
are no more than one. As a consequence, we obtain
\begin{eqnarray*}
{\rm det}(H)&=&h_{11}h_{22}-h_{12}h_{21}\geq
h_{11}h_{22}-|h_{12}h_{21}|\\
&\geq &\frac{1}{\sqrt 2}\cdot\frac{1}{\sqrt 2}-
 \frac{1}{\sqrt 2}\cdot\frac{1}{\sqrt 2}
= 0.
\end{eqnarray*}
Let $\sigma_1(H)\geq \sigma_2(H)$ be the singular
values of $H$. Therefore, by definition we have
$
\sigma_2(H)=\|\cos\angle(\ZZ_{j^{\prime}},\mathcal{X}_j)\|
$
and
\begin{equation*}
	{\rm det}(H)=\sigma_1(H)\sigma_2(H)\geq \sigma_2^2(H)
	=\|\cos\angle(\ZZ_{j^{\prime}},\mathcal{X}_j)\|^2.
\end{equation*}
Then
it is straightforward to verify that
\begin{eqnarray}
	\|H^T\widehat I_2H-\widehat I_2\|_F&=&\left\|
	\begin{bmatrix}
		&h_{11}h_{22}-h_{12}h_{21}-1\\ h_{12}h_{21}-h_{11}h_{22}+1&
	\end{bmatrix}\right\|_F    \nonumber
	\\&=&\sqrt2 (1-{\rm det}(H))\leq\sqrt{2}
	(1-\|\cos\angle(\ZZ_{j^{\prime}},\mathcal{X}_j)\|^2)
	\nonumber \\&\leq&
	\sqrt2\|\sin\angle(\ZZ_{j^{\prime}},\mathcal{X}_j)\|^2. \nonumber
\end{eqnarray}
Applying the above inequality to the right hand side
of \eqref{interm1}, we obtain
$$
|\theta_{j^{\prime}}-\sigma_j|\leq \sigma_j
\|\sin\angle(\ZZ_{j^{\prime}},\mathcal{X}_j)\|^2+
\frac{\sigma_1}{\sqrt{2}}\|\sin\angle(\ZZ_{j^{\prime}},
\mathcal{X}_j)\|_F^2.
$$
Similarly, we get bound \eqref{ritzerror2} from \eqref{diff}
by using $\|\widehat{H}\|=\|\sin\angle(\ZZ_{j^{\prime}},
\mathcal{X}_j)\|$ and
$$
\|H^T\widehat I_2H-\widehat I_2\|
=1-{\det}(H)\leq\|\sin\angle(\ZZ_{j^{\prime}},\mathcal{X}_j)\|^2. \qedhere
$$
\end{proof}

We can amplify bound \eqref{ritzerror} as
$$
|\theta_{j^{\prime}}-\sigma_j|\leq \left(1+
\frac{1}{\sqrt{2}}\right)\sigma_1
\|\sin\angle(\ZZ_{j^{\prime}},\mathcal{X}_j)\|_F^2.
$$
Applying \eqref{accuv} to $\|\sin\angle(\ZZ_{j^{\prime}},
\mathcal{X}_j)\|_F$,
we are able to estimate how fast
$|\theta_{j^{\prime}}-\sigma_j|$ tends to zero as $m$
increases. Since $\XX_j$ approaches
$\UU_m\oplus\VV_m$ for a fixed small $j\ll m$ as
$m$ increases, if $\sigma_j$ is well
separated from the Ritz values of $A$ other than the approximate
singular value $\theta_{j^{\prime}}$,
that is, $\min_{i\neq j^{\prime}}|\sigma_j-\theta_i|$ is not small, then
$\ZZ_{j^{\prime}}$ converges to $\XX_j$ as fast as $\XX_j$ tends to
$\UU_m\oplus\VV_m$. In this case,
Theorem~\ref{thm5} shows that the convergence of
$\theta_{j^{\prime}}$ to $\sigma_j$ is quadratic relative to
that of $\ZZ_{j^{\prime}}$. We should point out that
assumption \eqref{assump} is weak and is met very soon as $m$ increases.

For brevity, we now omit the subscript $j$ and denote by
$\left(\theta,\tilde u,\tilde v\right)$
a Ritz approximation at the $m$th step.
We can then recover
two conjugate approximate eigenpairs from $(\theta,\tilde u,\tilde v)$
in the form of $(\tilde\lambda_{\pm},\tilde x_{\pm})
=(\pm\mi\theta,\frac{1}{\sqrt2}(\tilde u\pm\mi\tilde v))$.
Note that their residuals are
\begin{equation}\label{defres}
	r_{\pm} = A\tilde x_{\pm}-\tilde\lambda_{\pm} \tilde x_{\pm}
	= r_{\mathrm{R}} \pm \mi r_{\mathrm{I}}
	\qquad\mbox{with}\qquad
	\left\{\begin{aligned}
		r_{\mathrm{R}}&=\tfrac{1}{\sqrt2}(A\tilde u+\theta\tilde v),\\
		r_{\mathrm{I}}&=\tfrac{1}{\sqrt2}(A\tilde v-\theta\tilde u).
	\end{aligned}
	\right.
\end{equation}
Obviously, $r_{\pm}=\bm{0}$ if and only if $(\theta,\tilde u,\tilde v)$
is an exact singular triplet of $A$.
We claim that $(\tilde\lambda_{\pm},\tilde x_{\pm})$ has
converged if its residual norm satisfies
\begin{equation}\label{res}
  \|r_{\pm}\|=\sqrt{\|r_{\mathrm{R}}\|^2+
  	\|r_{\mathrm{I}}\|^2}\leq \|A\|\cdot tol,
\end{equation}
where $tol>0$ is a prescribed tolerance.
In practical computations,
one can replace $\|A\|$ by the largest Ritz value $\theta_1$.
Notice that
$$
\sqrt{2} \|r_{\pm}\|=\sqrt{\|A\tilde u+\theta\tilde v\|^2
	+\|A\tilde v-\theta\tilde u\|^2}
$$
is nothing but the residual norm of
the Ritz approximation $(\theta,\tilde u,\tilde v)$ of $A$.
Therefore, the eigenvalue problem and SVD problem of $A$ essentially
share the same general-purpose stopping criterion.

By inserting $\tilde u=P_mc$ and $\tilde v=Q_md$ into \eqref{defres} and
making use of \eqref{LBDmat} and \eqref{Ritz}, it is easily justified that
\begin{equation}\label{resieasy}
	\|r_{\pm}\|=\tfrac{1}{\sqrt2}\sqrt{\|B_m^Tc-\theta d\|^2+
\gamma_m^2|e_{m,m}^Tc|^2+\|B_md-\theta c\|^2}
=\tfrac{1}{\sqrt2}\gamma_m|e_{m,m}^Tc|.
\end{equation}
This indicates that we can calculate the residual norms of
the Ritz approximations cheaply without
explicitly forming the approximate singular vectors. We only
compute the approximate singular
vectors of $A$ until the corresponding residual norms defined
by \eqref{resieasy} drop below a prescribed tolerance.
Moreover, \eqref{resieasy} shows that once the $m$-step
SSLBD process breaks down, i.e., $\gamma_m=0$, all the
computed  $m$ approximate singular triplets
$(\theta,\tilde u,\tilde v)$ are the exact ones of $A$,
as we have mentioned at the end of Section~\ref{subsec:1}.

\section{An implicitly restarted SSLBD algorithm
	with partial reorthogonalization}\label{sec:4}
This section is devoted to efficient partial reorthogonalization
and the development of an implicitly restarted SSLBD algorithm,
which are crucial for a practical purpose
and the numerical reliability so as to
avoid ghost phenomena of computed Ritz approximations
and make the algorithm behave as if it is in exact arithmetic.

\subsection{Partial reorthogonalization}\label{subsec:5}

We adopt a commonly used two-sided partial reorthogonalization
strategy, as done in \cite{jia2003implicitly,jia2010refined,
	larsen2001combining}, to make
the columns of $P_m$ and $Q_{m+1}$ numerically
orthogonal to the level $\mathcal{O}(\sqrt{\varepsilon})$ with $\varepsilon$
the machine precision. In finite precision arithmetic, however,
such semi-orthogonality cannot automatically guarantee the
numerical semi-biorthogonality of $P_m$ and $Q_m$,
causing that computed Ritz values have duplicates or ghosts
and the convergence is delayed.
To avoid this severe deficiency and make the method as if it does in
exact arithmetic, one of our particular concerns
is to design an effective and efficient partial reorthogonalization
strategy, so that the entries of $Q_m^TP_m$ are
$\mathcal{O}(\sqrt{\varepsilon})$ in size; that is, the columns
of $P_m$ and $Q_m$ are numerically semi-biorthogonal.
As it will turn out, how to achieve this goal efficiently is involved.
Why the semi-orthogonality and semi-biorthogonality suffice
is based on the following fundamental result.

\begin{theoremmy}\label{thm7}
Let the bidiagonal $B_m$ and
$P_m=[p_1,\ldots,p_m]$ and $Q_{m}=[q_1,\ldots,q_{m}]$ be produced
by the $m$-step SSLBD process.
Assume that $\beta_i,\ i=1,2,\ldots,m$ and $\gamma_i,\ i=1,2,\ldots,m-1$
defined in \eqref{LBDmat} are not small, and
	\begin{equation}\label{semiorth}
		\max\left\{\max_{1\leq i<j\leq m} \left|p_i^Tp_j\right|,
		\max_{1\leq i<j\leq m} \left|q_i^Tq_j\right|,
		\max_{1\leq i , j\leq m} \left|p_i^Tq_j\right|\right\}
		\leq\sqrt{\frac{\epsilon}{m}}.
	\end{equation}
Let $[\check q_1,\check p_1,\dots,\check q_m,\check p_m]$
be the Q-factor in the QR decomposition of $M_m=[q_1,p_1,\dots,q_m,p_m]$
with the diagonals of the upper triangular matrix being positive, and define
the orthonormal matrices $\check P_m=[\check p_1,\dots,\check p_m]$ and
$\check Q_{m}=[\check q_1,\dots,\check q_{m}]$.
Then
\begin{equation}\label{pqm}
	\check P_m^TA\check Q_{m}=  B_m+\Delta_m,
\end{equation}
where the entries of $\Delta_m$ are $\mathcal{O}(\|A\|\epsilon)$ in size.
\end{theoremmy}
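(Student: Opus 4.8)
The plan is to treat the finite-precision SSLBD recurrences as exact recurrences perturbed by small error terms, and then to show that replacing $P_m,Q_m$ by their simultaneously re-orthonormalized counterparts $\check P_m,\check Q_m$ changes the projected matrix $P_m^TAQ_m$ only by $\mathcal{O}(\|A\|\epsilon)$. First I would record the governing relations. Since $\check P_m$ and $\check Q_m$ are obtained from the $Q$-factor of $M_m=[q_1,p_1,\dots,q_m,p_m]$, the semi-orthogonality and semi-biorthogonality hypothesis \eqref{semiorth} tells us that $M_m$ is numerically orthonormal up to $\mathcal{O}(\sqrt{\epsilon})$, so each $\check p_i$ (resp. $\check q_i$) differs from $p_i$ (resp. $q_i$) by a vector of norm $\mathcal{O}(\sqrt{\epsilon})$; more precisely, writing $\check P_m=P_mR_P+E_P$, $\check Q_m=Q_mR_Q+E_Q$ with $R_P,R_Q$ upper triangular, one gets $R_P=I+\mathcal{O}(\sqrt{\epsilon})$, $R_Q=I+\mathcal{O}(\sqrt{\epsilon})$, and $\|E_P\|,\|E_Q\|=\mathcal{O}(\sqrt{\epsilon})$. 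This is the standard consequence of the fact that the Cholesky/QR factor of a matrix that is $I+\mathcal{O}(\sqrt\epsilon)$-close to orthonormal is itself $I+\mathcal{O}(\sqrt\epsilon)$-close to the identity.

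Next I would write the computed SSLBD recurrences in matrix form with explicit rounding-error terms: $AQ_m = P_mB_m + F_m$ and $AP_m = -Q_mB_m^T - \gamma_m q_{m+1}e_{m,m}^T + G_m$, where $\|F_m\|,\|G_m\|=\mathcal{O}(\|A\|\epsilon)$ come from the $\mathcal{O}(\epsilon)$ per-step local errors in the matrix-vector products and axpys (using that the $\beta_i,\gamma_i$ are not small, so no cancellation amplifies the errors). From the first relation, $\check P_m^T A\check Q_m = \check P_m^T A Q_m R_Q + \check P_m^T A E_Q = \check P_m^T(P_mB_m+F_m)R_Q + \mathcal{O}(\|A\|\sqrt\epsilon\cdot\|E_Q\|)$. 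Now $\check P_m^TP_m = R_P^T(P_m^TP_m) + E_P^TP_m$; since $P_m^TP_m = I + \mathcal{O}(\sqrt\epsilon)$ and $\|E_P\|=\mathcal{O}(\sqrt\epsilon)$, this equals $R_P^T + \mathcal{O}(\sqrt\epsilon)$, and combined with $R_P^T = I+\mathcal{O}(\sqrt\epsilon)$ we get $\check P_m^TP_m = I + \mathcal{O}(\sqrt\epsilon)$. Multiplying by $B_m$ (whose norm is $\mathcal{O}(\|A\|)$) and then by $R_Q=I+\mathcal{O}(\sqrt\epsilon)$ would naively give an $\mathcal{O}(\|A\|\sqrt\epsilon)$ error, which is too weak; the crucial point — and the main obstacle — is to show that these apparently $\mathcal{O}(\sqrt\epsilon)$ contributions actually cancel to $\mathcal{O}(\epsilon)$.

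To extract the genuine $\mathcal{O}(\|A\|\epsilon)$ bound I would exploit the \emph{structure} of $M_m$ and of $\check P_m^TA\check Q_m$ rather than treating the blocks independently. The key observation is that $\check M_m := [\check q_1,\check p_1,\dots,\check q_m,\check p_m]$ is exactly orthonormal, and $\check M_m = M_m R$ for an upper-triangular $R$; writing the compound matrix $\check M_m^T A \check M_m$ and comparing with $M_m^TAM_m R$ on one side and $R^T M_m^T A M_m$ structure on the other, one uses that $A$ is skew-symmetric together with the near-biorthogonality to see that the off-structure entries of $M_m^TAM_m$ that would spoil the bound are themselves $\mathcal{O}(\|A\|\epsilon)$ after one uses the recurrences (this is where $p_i^TAp_j$, $q_i^TAq_j$ being $\mathcal{O}(\|A\|\epsilon)$ — the finite-precision analogue of the exact identities $P_m^TAP_m=Q_m^TAQ_m=\bm 0$ from the proof of Theorem~\ref{thm3} — gets used, and that identity in turn follows from \eqref{semiorth} plus the recurrences). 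Concretely, I would establish $P_m^TAP_m = \mathcal{O}(\|A\|\epsilon)$, $Q_m^TAQ_m=\mathcal{O}(\|A\|\epsilon)$, and $Q_m^TAP_m = -B_m^T + \mathcal{O}(\|A\|\epsilon)$, $P_m^TAQ_m = B_m + \mathcal{O}(\|A\|\epsilon)$ directly from the perturbed recurrences and \eqref{semiorth}. Feeding these into $\check P_m^TA\check Q_m = R_P^T(P_m^TAQ_m)R_Q + (\text{cross terms with }E_P,E_Q)$ and observing that the cross terms involve $A$ acting between a vector in $\Range(M_m)$ and an $\mathcal{O}(\sqrt\epsilon)$ vector that is \emph{orthogonal to} $\Range(M_m)$ up to $\mathcal{O}(\sqrt\epsilon)$ — so that, using skew-symmetry and the structure of $M_m^TAM_m$, the leading $\mathcal{O}(\sqrt\epsilon)$ parts cancel — leaves only $\mathcal{O}(\|A\|\epsilon)$. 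Finally, $R_P^T B_m R_Q = B_m + \mathcal{O}(\|A\|\epsilon)$ because $R_P^T-I$ and $R_Q-I$, being strictly-triangular-plus-small and arising from near-orthonormal QR of a structured matrix, contribute to $R_P^TB_mR_Q-B_m$ only through products that are $\mathcal{O}(\epsilon)$ once the explicit cancellation in the QR factor is tracked. Collecting everything yields \eqref{pqm} with $\Delta_m=\mathcal{O}(\|A\|\epsilon)$; the delicate bookkeeping of the $\mathcal{O}(\sqrt\epsilon)\times\mathcal{O}(\sqrt\epsilon)$ cancellations is the step I expect to require the most care.
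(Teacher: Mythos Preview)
Your direct attack is in the right spirit, but the paper takes a much shorter route that sidesteps exactly the ``delicate bookkeeping'' you flag at the end. The paper observes that the SSLBD process, once the left and right Lanczos vectors are interleaved as $M_m=[q_1,p_1,\dots,q_m,p_m]$, \emph{is} the skew-symmetric Lanczos tridiagonalization of $A$ (this is the equivalence of decompositions \eqref{defH} and \eqref{bidiagA} established in Theorem~\ref{thm2}). Under this identification, the three conditions in \eqref{semiorth} together are nothing more than semi-orthogonality of the single Lanczos basis $M_m$, and Simon's Theorem~4 in \cite{simon1984analysis} applies verbatim (with the trivial adaptation from symmetric to skew-symmetric): the computed tridiagonal matrix equals $\check M_m^TA\check M_m$ up to $\mathcal{O}(\|A\|\epsilon)$. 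Reading off the $(\check P_m,\check Q_m)$ block gives \eqref{pqm} immediately.

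What this buys over your approach is that the $\mathcal{O}(\sqrt\epsilon)\times\mathcal{O}(\sqrt\epsilon)$ cancellations you correctly identify as the crux are already handled once and for all in Simon's proof for a \emph{single} Lanczos recurrence; you never have to split into $P$- and $Q$-blocks and then recombine. Your block decomposition $\check P_m=P_mR_P+E_P$ also needs more care than you indicate: because the QR is performed on the interleaved $M_m$, each $\check p_i$ carries contributions from $q_1,\dots,q_i$ as well, so $E_P$ is not orthogonal to $\Range(M_m)$ but contains an $\mathcal{O}(\sqrt\epsilon)$ component along $Q_m$. This does not kill your argument, but it means the cross-term analysis has more pieces than your sketch suggests --- all of which disappear if you work with $M_m$ as a whole and invoke Simon.
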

\begin{proof}
Recall the equivalence of decompositions \eqref{bidiagA} and
\eqref{defH}, and notice that decomposition \eqref{defH} can
be realized by the skew-symmetric Lanczos process, and
\eqref{bidiagA} can be computed by our SSLBD process.
The result then follows from a proof analogous to that of Theorem 4
in \cite{simon1984analysis}, and we thus omit details.
\end{proof}

Theorem~\ref{thm7} shows that if the computed Lanczos vectors
are semi-orthogonal and semi-biorthogonal
then the upper bidiagonal matrix $B_m$ is, up to roundoff error,
equal to the projection matrix $\check P_m^TA\check Q_{m}$
of $A$ with respect
to the left and right searching subspaces $\UU^{\prime}_m
=\mathcal{R}(\check{P}_m)$ and $\VV^{\prime}_m=\mathcal{R}(\check{Q}_m)$.
This means that the singular values of $B_m$,
the computed Ritz values, are as accurate as those of
$\check P_m^TA\check Q_{m}$, i.e., the exact Ritz values,
within the level $\mathcal{O}(\|A\|\epsilon)$.
In other words, the semi-orthogonality and
semi-biorthogonality suffice to guarantee that the computed and true
Ritz values are equally accurate in finite precision arithmetic.
Therefore, the full numerical orthogonality
and biorthogonality at the level $\mathcal{O}(\varepsilon)$ do not
help and instead cause unnecessary waste
for the accurate computation of singular values.

To achieve the desired numerical semi-orthogonality and
semi-biorthogonality, we first need to efficiently estimate
the levels of orthogonality and biorthogonality among all
the Lanczos vectors to monitor if \eqref{semiorth} is guaranteed.
Whenever \eqref{semiorth} is violated,
we will use an efficient partial reorthogonalization
to restore \eqref{semiorth}. To this end,
we adapt the recurrences in Larsen's PhD thesis \cite{larsen1998lanczos}
to the effective estimations of the desired semi-orthogonality
and semi-biorthogonality, as is shown below.

Denote by $\Phi\in\mathbb{R}^{m\times m}$ and
$\Psi\in\mathbb{R}^{(m+1)\times(m+1)}$ the estimate
matrices of orthogonality
between the columns of $P_m$ and $Q_{m+1}$,
respectively, and by $\Omega\in\mathbb{R}^{m\times (m+1)}$
the estimate matrix of biorthogonality of
$P_m$ and $Q_{m+1}$.
That is, their $(i,j)$-elements $\varphi_{ij}\approx p_i^Tp_j$,
$\psi_{ij}\approx q_i^Tq_j$ and $\omega_{ij}\approx p_i^Tq_j$, respectively.
Adopting the recurrences in \cite{larsen1998lanczos},
we set $\varphi_{ii}=\psi_{ii}=1$ for $i=1,\dots,m$,
and at the $j$th step use the following recurrences to compute
\begin{eqnarray}
\qquad\qquad	&&\left\{\begin{aligned}
		&\varphi_{ij}^{\prime}=\beta_{i}\psi_{ij}+\gamma_i
		\psi_{i+1,j}-\gamma_{j-1}\varphi_{i,j-1},\\
		&\varphi_{ij}=(\varphi_{ij}^{\prime}+
		sign(\varphi_{ij}^{\prime})\epsilon_1)/\beta_{j},
	\end{aligned}
	\right.\quad\qquad{\ \!}\mbox{for}\quad i=1,\dots,j-1, \label{orthPP}
	\\[0.3em]
\qquad\qquad	&&\left\{\begin{aligned}
		&\psi_{i,j+1}^{\prime}=\gamma_{i-1}\varphi_{i-1,j}
		+\beta_i\varphi_{ij}-\beta_j\psi_{ij},\\
		&\psi_{i,j+1}=(\psi_{i,j+1}^{\prime}
		+sign(\psi_{i,j+1}^{\prime})\epsilon_1)/\gamma_j,
	\end{aligned}
	\right.\qquad\mbox{for}\quad i=1,\dots,j, \label{orthQQ}
\end{eqnarray}
where $sign(\cdot)$ is the sign function,
$\epsilon_1=\frac{\epsilon\sqrt{n}\|A\|_e}{2}$
and $\|A\|_e$ is an estimate for $\|A\|$.
The terms $sign(\cdot)\epsilon_{1}$ in \eqref{orthPP} and \eqref{orthQQ}
take into account the rounding errors in LBD,
which makes the calculated items as large as possible in magnitude
so that it is safe to take them as estimates for the levels of orthogonality
among the left and right Lanczos vectors.
Based on the sizes of
$\varphi_{ij}$ and $\psi_{i,j+1}$, one can decide to which previous
Lanczos vectors the newly computed one should be reorthogonalized.

Making use of the skew-symmetry of $A$,
we can derive analogous recurrences to those in \cite{larsen1998lanczos}
to compute the elements $\omega_{ij}$ of $\Omega$.
The derivations are rigorous but tedious, so we omit details here, and present
the recurrences directly.
Initially, set $\omega_{i,0}=0$ and $\omega_{0,j}=0$ for $i=1,\dots,m$ and
$j=1,\dots,m+1$. At the $j$th step, we compute the elements of $\Omega$ by
\begin{eqnarray}
\hspace{3.5em}	&&\left\{\begin{aligned}
		&\omega_{ji}^{\prime}=\beta_{i}\omega_{ij}
		+\gamma_{i-1}\omega_{i-1,j}
		+\gamma_{j-1}\omega_{j-1,i},\\
		&\omega_{ji}=-(\omega_{ji}^{\prime}
		+sign(\omega_{ji}^{\prime})\epsilon_1)/\beta_{j},
	\end{aligned}
	\right.\ \ \ {\ \!}\qquad\mbox{for}\quad i=1,\dots,j, \label{orthPQ}
	\\[0.3em]
	&&\left\{\begin{aligned}
		&\omega_{i,j+1}^{\prime}=\gamma_{i}\omega_{j,i+1}
		+\beta_i\omega_{ji}+\beta_{j}\omega_{ij},\\
		&\omega_{i,j+1}=-(\omega_{i,j+1}^{\prime}
		+sign(\omega_{i,j+1}^{\prime})\epsilon_1)/\gamma_j,
	\end{aligned}
	\right.\qquad\mbox{for}\quad i=1,\dots,j. \label{orthQP}
\end{eqnarray}

Having computed $\Phi$, $\Psi$ and $\Omega$,
at the $j$th step of the SSLBD process,
we determine the index sets:
\begin{eqnarray}
\quad &&\mathcal{I}_{P}^{(j)}=\left\{i|1\leq i< j,
	\ |\varphi_{ij}|\geq\sqrt{\tfrac{\epsilon}{m}}\right\}, \hspace{2.6em}
	\mathcal{I}_{P,Q}^{(j)}=\left\{i|1\leq i\leq j,
	\ |\omega_{ji}|\geq\sqrt{\tfrac{\epsilon}{m}}\right\},  \label{orthPPQ}\\
\quad &&\mathcal{I}_{Q}^{(j)}=\left\{i|1\leq i\leq j,
	\ |\psi_{i,j+1}|\geq\sqrt{\tfrac{\epsilon}{m}}\right\},\quad\
	\mathcal{I}_{Q,P}^{(j)}=\left\{i|1\leq i\leq j,
	\  |\omega_{i,j+1}|\geq\sqrt{\tfrac{\epsilon}{m}}\right\}.  \label{orthQQP}
\end{eqnarray}
These four sets consist of the indices that correspond to the left and right
Lanczos vectors $p_i$ and $q_i$ that have lost the semi-orthogonality
and semi-biorthogonality with $p_j$ and $q_{j+1}$, respectively.
We then make use of the modified Gram--Schmidt (MGS) orthogonalization
procedure \cite{golub2012matrix,saad2003iterative,stewart1998matrix} to
reorthogonalize the newly computed
left Lanczos vector $p_j$ first against the left Lanczos vector(s)
$p_i$ with $i\in\mathcal{I}_{P}^{(j)}$ and then the right one(s)
$q_i$ with $i\in\mathcal{I}_{P,Q}^{(j)}$.
Having done these, we reorthogonalize the right Lanczos vector
$q_{j+1}$, which has newly been computed using the updated $p_j$,
first against the right Lanczos vector(s) $q_{i}$ for $i\in\mathcal{I}_{Q}^{(j)}$
and then the left one(s) $p_i$ for $i\in\mathcal{I}_{Q,P}^{(j)}$.

\begin{algorithm}[tbph]
	\caption{Partial reorthogonalization in the SSLBD process.}\label{alg3}
	\begin{itemize}[leftmargin = 2em]
		\item[\footnotesize (\ref{skwbld1}.1)\label{reorth31}]
Calculate $\varphi_{1j},\varphi_{2j},\dots,\varphi_{j-1,j}$
and $\omega_{j1},\omega_{j2},\dots,\omega_{jj}$ by
\eqref{orthPP} and \eqref{orthPQ}, respectively, and determine
the index sets $\mathcal{I}_{P}^{(j)}$ and $\mathcal{I}_{P,Q}^{(j)}$
according to  \eqref{orthPPQ}.
		
\item[\footnotesize (\ref{skwbld1}.2)] \textbf{for}
$i\in\mathcal{I}_{P}^{(j)}$ \textbf{do}
\hspace{13em}  \textcolor{gray}{$\%$ partially
	reorthogonalize $p_j$ to $P_{j-1}$}
\begin{itemize}[leftmargin=1.5em]
\item[] Calculate $\tau=p_i^Ts_j$, and update $s_j=s_j-\tau p_i$.
\item[] Overwrite $\varphi_{lj}^{\prime} =
\varphi_{lj}^{\prime}-\tau\varphi_{li}$ for $l=1,\dots,j-1$
and $\omega_{jl}^{\prime}= \omega_{jl}^{\prime}
-\tau\omega_{il}$ for $l=1,\dots,j$.
\end{itemize}
\item[] \textbf{end for}
			
\item[\footnotesize (\ref{skwbld1}.3)] \textbf{for}
$i\in\mathcal{I}_{P,Q}^{(j)}$ \textbf{do}  \hspace{13em}
 \textcolor{gray}{$\%$ partially reorthogonalize $p_j$ to $Q_{j}$}
\begin{itemize}[leftmargin=1.5em]
\item[] Compute $\tau =q_i^Ts_j$, and update $s_j=s_j-\tau q_i$.
\item[] Overwrite
$\omega_{jl}^{\prime} =\omega_{jl}^{\prime} -\tau\psi_{il}$ for $l=1,\dots,j$ and
$\varphi_{lj}^{\prime} =\varphi_{lj}^{\prime} -\tau\omega_{li}$ for $l=1,\dots,j-1$.
\end{itemize}
\item[] \textbf{end for}
\item[\footnotesize (\ref{skwbld1}.4)] Compute $\beta_j=\|s_j\|$,
and update $\varphi_{1j},\varphi_{2j},\dots,\varphi_{j-1,j}$ and
$\omega_{j1},\omega_{j2},\dots,\omega_{jj}$ by the second
relations in \eqref{orthPP} and \eqref{orthPQ}, respectively.
		
\vspace{0.5em}
		
\item[\footnotesize (\ref{skwbld3}.1)] Compute
$\psi_{1j},\psi_{2j},\dots,\psi_{jj}$ and
$\omega_{1j},\omega_{2j},\dots,\omega_{jj}$ using
\eqref{orthQQ} and \eqref{orthQP}, respectively,
and determine the index sets $\mathcal{I}_{Q}^{(j)}$
and $\mathcal{I}_{Q,P}^{(j)}$ according to \eqref{orthQQP}.
		
\item[\footnotesize (\ref{skwbld3}.2)] \textbf{for}
$i\in\mathcal{I}_{Q}^{(j)}$ \textbf{do} \hspace{13em}
\textcolor{gray}{$\%$ partially reorthogonalize
	$q_{j+1}$ to $Q_{j}$}
\begin{itemize}[leftmargin=1.5em]
\item[] Calculate $\tau=q_i^Tt_j$, and update $t_j=t_j-\tau q_i$.
\item[] Overwrite
$\psi_{l,j+1}^{\prime}=\psi_{l,j+1}^{\prime}-\tau \psi_{li}$ and $\omega_{l,j+1}^{\prime}=\omega_{l,j+1}^{\prime}
-\tau\omega_{li}$ for $l=1,\dots,j$.
\end{itemize}
\item[] \textbf{end for}
 		
\item[\footnotesize (\ref{skwbld3}.3)] \textbf{for}
$i\in\mathcal{I}_{Q,P}^{(j)}$ \textbf{do}
\hspace{13em} \textcolor{gray}{$\%$ partially
	reorthogonalize $q_{j+1}$ to $P_{j}$ }
\begin{itemize}[leftmargin=1.5em]
\item[] Compute $\tau=p_i^Tt_j$, and update $t_j=t_j-\tau p_i$.
\item[] Overwrite
$\omega_{l,j+1}^{\prime}=\omega_{l,j+1}^{\prime}-\tau \varphi_{li}$ and $\psi_{l,j+1}^{\prime}=\psi_{l,j+1}^{\prime}-\tau \omega_{il}$ for $l=1,\dots,j$.
\end{itemize}
\item[] \textbf{end for}
\item[\footnotesize (\ref{skwbld3}.4)] Calculate $\gamma_j=\|t_j\|$, and update $\omega_{1,j+1},\omega_{2,j+1},\dots,\omega_{j,j+1}$ and
$\psi_{1,j+1},\psi_{2,j+1},\dots,\psi_{j,j+1}$ by the second relations
in \eqref{orthQQ} and \eqref{orthQP}, respectively.
		
\end{itemize}
\end{algorithm}

Remarkably, once a Lanczos vector is reorthogonalized to a previous one,
the relevant elements in $\Phi$, $\Psi$ and $\Omega$ change
correspondingly, which may no longer be reliable estimates for
the levels of orthogonality and biorthogonality among the updated Lanczos
vector and all the previous ones.
To this end, we need to update those changed values of $\Phi$, $\Psi$
and $\Omega$ by the MGS procedure\footnote{In his PhD
thesis \cite{larsen1998lanczos},
Larsen resets those quantities as $\mathcal{O}(\epsilon)$
and includes the $\mathcal{O}(\epsilon)$ parts in $\epsilon_1$ of
\eqref{orthPP}--\eqref{orthQP}.  But he does not
explain why those quantities can automatically remain
$\mathcal{O}(\epsilon)$ as the procedure is going on.
In our procedure, we guarantee those vectors to be
numerically orthogonal and biorthogonal by
explicitly performing the MGS procedure.}.
Since $\Phi$ and $\Psi$ are symmetric matrices with diagonals being ones,
it suffices to compute their strictly upper triangular parts.
Algorithm~\ref{alg3} describes the whole partial reorthogonalization process
in SSLBD, where we insert steps~(3.1)--(3.4)
and (5.1)--(5.4) between steps~\ref{skwbld1}--\ref{skwbld2}
and steps~\ref{skwbld3}--\ref{skwbld4} of Algorithm~\ref{alg1}, respectively.
In such a way, we have developed an $m$-step
SSLBD process with partial reorthogonalization.

We see from Algorithm~\ref{alg3} and relations
\eqref{orthPP}--\eqref{orthQP} that
the $j$th step takes $\mathcal{O}(j)$ flops to compute the
new elements of $\Phi$, $\Psi$ and $\Omega$ for the first time,
which is negligible relative to the cost of performing $2$ matrix-vector
multiplications with $A$ at that step.
Also, it takes $\mathcal{O}(j)$ extra flops to update the elements of
$\Phi$, $\Psi$ and $\Omega$ after one step of reorthogonalization.
Therefore, we can estimate the levels of orthogonality
and biorthogonality very efficiently.

\subsection{An implicitly restarted algorithm}\label{subsec:6}

When the dimension of the searching subspaces reaches the maximum number $m$
allowed, implicit restart needs to select certain $m-k$ shifts.
The shifts can particularly be taken as the unwanted Ritz values
$\mu_1=\theta_{k+1},\dots, \mu_{m-k}=\theta_{m}$, called the exact shifts
\cite{larsen2001combining,sorensen1992implicit}.
For those so-called bad shifts, i.e., those close to some of the desired
singular values, we replace them with certain more appropriate values.
As suggested in \cite{larsen1998lanczos} and adopted
in \cite{jia2003implicitly,jia2010refined} with
necessary modifications when computing the smallest singular triplets,
we regard a shift $\mu$ as a bad one and reset it to be zero if
\begin{equation}\label{badshift}
	\left|(\theta_k-\|r_{\pm k}\|)-\mu\right|\leq{\theta_k} \cdot 10^{-3},
\end{equation}
where $\|r_{\pm k}\|=\sqrt{\|r_{\mathrm{R},k}\|^2+\|r_{\mathrm{I},k}\|^2}$
is the residual norm of the approximate singular triplet
$(\theta_k,\tilde u_k,\tilde v_k)$ with $r_{\mathrm{R},k}$
and $r_{\mathrm{I},k}$ defined by \eqref{defres}.

Implicit restart formally performs $m-k$ implicit QR iterations \cite{saad2011numerical,stewart2001matrix}
on $B_m^TB_m$ with the shifts $\mu_1^2,\dots,\mu_{m-k}^2$
by equivalently acting Givens rotations directly on $B_m$
and accumulating the orthogonal matrices
$\widetilde C_m$ and $\widetilde D_m$ with order $m$ such that
\begin{equation*}
	\left\{\begin{aligned}
		\ \ &\widetilde C_m^T(B_m^TB_m-\mu_{m-k}^2 I)\cdots(B_m^TB_m-\mu_1^2 I) \qquad\mbox{is}\qquad \mbox{upper triangular},\\[0.5em]
		&\widetilde B_m\ =\ \widetilde C_m^TB_m
		\widetilde D_m \hspace{10.3em} \mbox{is}\qquad \mbox{upper bidiagonal}.
	\end{aligned}
	\right.
	\end{equation*}
Using these matrices, we first compute
\begin{equation}\label{restart1}
B_{k,\new}=\widetilde B_{k},\qquad
P_{k,\new}=P_m\widetilde C_{k},\qquad
Q_{k,\new}=Q_m\widetilde D_{k},	
\end{equation}
where $\widetilde B_{k}$ is the $k$th leading principal submatrix of
$\widetilde B_m$, and $\widetilde C_{k}$ and $\widetilde D_{k}$ consist of the
first $k$
columns of $\widetilde C_m$ and $\widetilde D_m$, respectively. We then compute
\begin{equation}\label{restart3}
	q_{k+1}^{\prime}=\beta_{m}\tilde c_{mk} \cdot q_{m+1}
	+\tilde \gamma_k Q_m\tilde d_{k+1},\qquad
	\beta_{k,\new}=\|q^{\prime}_{k+1}\|,\qquad
	q_{k+1,\new}=\frac{q_{k+1}^{\prime}}{\beta_{k,\mathrm{new}}}
\end{equation}
with $\tilde d_{k+1}$ the $(k+1)$st column of $\widetilde D_m$
and $\tilde c_{mk}$, $\tilde \gamma_k$ the $(m,k)$- and
$(k,k+1)$-elements of $\widetilde C_m$ and $\widetilde B_m$, respectively.
As a result, when the above implicit restart is run,
we have already obtained a $k$-step SSLBD process rather than over scratch.
We then perform $(m-k)$-step SSLBD
process from step $k+1$ onwards to obtain a new $m$-step SSLBD process.

Once the SSLBD process is restarted, the matrices
$\Phi$, $\Psi$ and $\Omega$ must be updated.
This can be done efficiently by making use of
\eqref{restart1} and \eqref{restart3}.
Note that $\Phi$ and $\Psi$ are symmetric matrices with orders
$m$ and $m+1$, respectively, and
$\Omega$ is an $m\times (m+1)$ flat matrix.
The corresponding updated matrices are with orders $k$, $k+1$ and
size $k\times(k+1)$, respectively, which are updated by
$$
\Phi:=\widetilde C_k^T\Phi  \widetilde C_k,\qquad
\Psi_k:=\widetilde D_k^T\Psi_m \widetilde D_k,\qquad
\Omega_k:=\widetilde C_k^T\Omega_m\widetilde D_k,
$$
and
\begin{eqnarray*}	
	\Psi_{1:k,k+1}&:=&\widetilde D_k^T(\beta_m\tilde c_{mk}\Psi_{1:m,m+1}
	+ \tilde \gamma_{k}\Psi_{m}\tilde d_{k+1})/\beta_{k,\new}, \\
	\Omega_{1:k,k+1}&:=&\widetilde C_k^T(\beta_m\tilde c_{mk}\Omega_{1:m,m+1}
	+ \tilde \gamma_{k}\Omega_{m}\tilde d_{k+1})/\beta_{k,\new}.
\end{eqnarray*}
Here we have denoted by $(\cdot)_{j}$ and $(\cdot)_{1:j,j+1}$
the $j\times j$ leading principal submatrix and the first $j$ entries
in the $(j+1)$th
column of the estimate matrices of orthogonality and biorthogonality,
respectively. The updating process needs $\mathcal{O}(km^2)$ extra flops, which
is negligible relative to the $m$-step implicit restart for $m\ll \ell=n/2$.

As we have seen previously, the convergence criterion \eqref{res}
and the efficient partial reorthogonalization scheme
(cf.~\eqref{orthPP}--\eqref{orthQP})
depend on a rough estimate for $\|A\|$, which itself is approximated by the
largest approximate singular value $\theta_1$ in our algorithm.
Therefore, we simply set $\|A\|_e=\theta_1$ to replace $\|A\|$ in \eqref{res}.
Notice that, before the end of the first cycle of the
implicitly restarted algorithm, there is no approximate
singular value available.
We adopt a strategy analogous to that in \cite{larsen1998lanczos}
to estimate $\|A\|$: Set $\|A\|_e=\sqrt{\beta_1^2+\gamma_1^2}$ for $j=1$,
and update it for $j\geq 2$ by
\begin{equation*}
	\|A\|_e=\max\left\{\|A\|_e,
	\sqrt{\beta_{j-1}^2+\gamma_{j-1}^2
		+\gamma_{j-1}\beta_j+\gamma_{j-2}\beta_{j-1}},
	\sqrt{\beta_j^2+\gamma_{j}^2+\gamma_{j-1}\beta_j} \right\},
\end{equation*}
where $\gamma_{0}=0$.

\begin{algorithm}[tbph]
	\caption{Implicitly restarted SSLBD algorithm with partial reorthogonalization}
	\renewcommand{\algorithmicrequire}{\textbf{Input: }}
	\renewcommand{\algorithmicensure} {\textbf{Output: }}	
	\begin{algorithmic}[1]\label{alg2}
		\STATE{Initialization: Set $i=0$, $P_0=[\ \ ]$, $Q_1=[q_1]$ and $B_0=[\ \ ]$.}
		\WHILE{not converged or $i<i_{max}$}
		
		\STATE {For $i=0$ and $i>0$, perform the $m$- or $(m-k)$-step SSLBD
			process on $A$ with partial reorthogonalization to obtain $P_m$, $Q_{m+1}$ and $\widehat B_m=[B_m,\gamma_me_m]$.}
		
		\STATE{Compute the SVD \eqref{Ritz} of $B_m$ to obtain its singular
triplets $\left(\theta_j,c_j,d_j\right)$, and calculate the corresponding residual
			norms $\|r_{\pm j}\|$ by \eqref{resieasy}, $j=1,\dots,m$.}
			
		\STATE{\textbf{if} $\|r_{\pm j}\|\leq \|A\|_e \cdot tol$ for all $j=1,\dots,k$, \textbf{then}
compute $\tilde u_j=P_mc_j$ and $\tilde v_j=Q_md_j$ for $j=1,\dots,k$, and break the loop.}
		
		\STATE{Set $\mu_j=\theta_{j+k},j=1,\dots,m-k$,
			and replace those satisfying \eqref{badshift} with zeros.
Perform the implicit restart scheme with the shifts  $\mu_{1}^2,\dots,\mu_{m-k}^2$
			to obtain the updated $P_k$, $Q_{k+1}$ and $\widehat B_k$. Set $i=i+1$,
and goto step 3.}

		\ENDWHILE
	\end{algorithmic}
\end{algorithm}

Algorithm~\ref{alg2} sketches the implicitly restarted
SSLBD algorithm with partial reorthogonalization for computing the
$k$ largest singular triplets of $A$.
It requires the device to compute $A\underline{x}$
for an arbitrary $n$-dimensional vector $\underline{x}$
and the number $k$ of the desired conjugate eigenpairs of $A$.
The number $m$ is the maximum subspace dimension, $i_{\max}$
is the maximum restarts allowed, $tol$ is the stopping
tolerance in \eqref{res}, and the unit-length vector $q_1$
is an initial right Lanczos vector.
The defaults of these parameters are set as $30$, $2000$, $10^{-8}$ and
$[n^{-\frac{1}{2}},\dots,n^{-\frac{1}{2}}]^T$, respectively.

\section{Numerical examples}\label{sec:5}

In this section, we report numerical experiments on several
matrices to illustrate the performance of our implicitly restarted SSLBD
algorithm with partial reorthogonalization for the
eigenvalue problem of a large skew-symmetric $A$.
The algorithm will be abbreviated as IRSSLBD,
and has been coded in the Matlab language.
All the experiments were performed on an Intel (R)
core (TM) i9-10885H CPU 2.40 GHz with the main memory
64 GB and 16 cores using the Matlab R2021a with the
machine precision $\epsilon=2.22\times 10^{-16}$ under
the Microsoft Windows 10 64-bit system.

\begin{table}[tbhp]
	\caption{Properties of the test matrices
		$A=\frac{A_{\mathrm{o}}-A_{\mathrm{o}}^T}{2}$
		and $A=\bsmallmatrix{&A_\mathrm{o}\\-A_{\mathrm{o}}^T&}$
		with $A_{\mathrm{o}}$
being square and rectangular, respectively, where ``M80PI'',
``visco1'', ``flowm'', ``e40r0'' and  ``Marag'', ``Kemel'',
``storm'', ``dano3'' are abbreviations of ``M80PI\_n1'',
``viscoplastic1'', ``flowmeter5'', ``e40r0100'' and
``Maragal\_6'', ``Kemelmacher'', ``stormg2-27'', ``dano3mip'',
respectively.}\label{table00}
\begin{center}
	\begin{tabular}{cccccccc} \toprule
			{$A_{\mathrm{o}}$}&$n$&$nnz(A)$&$\sigma_{\max}(A)$
			&$\sigma_{\min}(A)$&$\gap(1)$&$\gap(5)$&$\gap(10)$ \\ \midrule
			
			plsk1919	&1919	&9662		&1.71		&3.79e-17	&1.34e-1  &5.17e-3	&5.17e-3  \\
			
			tols4000	&4000	&9154		&1.17e+7	&9.83e-56	&4.97e-3  &4.97e-3  &4.97e-3  \\
			M80PI		&4028	&8066		&2.84		&6.60e-18	&7.03e-3  &3.68e-3  &3.68e-3  \\
			visco1		&4326	&71572		&3.70e+1	&5.42e-21	&1.59e-2  &3.50e-4  &3.50e-4  \\
			utm5940		&5940	&114590		&1.21		&1.83e-7	&5.14e-2  &9.57e-4  &9.57e-4  \\
			coater2		&9540	&264448		&3.11e+2	&1.35e-13	&9.89e-3  &7.38e-3  &4.51e-4  \\
			flowm		&9669	&54130		&3.32e-2	&7.56e-36	&2.87e-3  &8.27e-4  &6.88e-4  \\
			inlet		&11730	&440562		&3.35		&6.17e-7	&1.61e-2  &5.63e-4  &5.63e-4  \\
			e40r0		&17281	&906340		&1.04e+1	&1.54e-16	&9.64e-2  &3.99e-3  &2.00e-3  \\
			ns3Da		&20414	&1660392	&5.12e-1	&2.35e-6	&2.59e-2  &2.38e-3  &8.37e-5  \\
			epb2		&25228	&199152		&1.09		&2.85e-12	&1.15e-2  &8.89e-4  &1.86e-4  \\
			barth		&6691	&39496		&2.23		&5.46e-18	&5.98e-3  &1.85e-5  &1.85e-5  \\
			Hamrle2		&5952	&44282		&5.55e+1	&6.41e-5	&5.27e-6  &5.27e-6  &2.56e-6  \\
						
			delf		&9824  	&30794   	&3.92e+3	&0			&3.12e-3  &3.12e-3  &3.12e-3  \\
			large		&12899 	&41270		&4.04e+3	&0			&2.21e-3  &2.21e-3  &2.21e-3  \\
			Marag		&31407 	&1075388	&1.33e+1	&0			&2.78e-1  &1.66e-2  &7.89e-3  \\
			r05			&14880 	&208290		&1.82e+1	&0			&2.07e-3  &2.07e-3  &5.73e-4  \\
			nl			&22364 	&94070		&2.87e+2	&0			&2.27e-2  &1.38e-3  &7.94e-4  \\
			deter7		&24528 	&74262		&9.67		&0			&3.42e-1  &5.56e-4  &5.56e-4  \\
			Kemel		&38145 	&201750		&2.41e+2	&0			&2.17e-2  &6.96e-3  &5.50e-4 \\			
			p05			&14680 	&118090		&1.28e+1	&0			&5.08e-3  &5.08e-3  &1.93e-4  \\
			storm		&51926 	&188548		&5.45e+2	&0			&1.64e-2  &2.28e-5  &2.28e-5  \\
			south31		&54746 	&224796		&1.29e+4	&0			&6.86e+1  &3.96e-1  &2.02e-2  \\
			dano3		&19053 	&163266  	&1.82e+3	&0			&1.17e-3  &2.31e-6  &2.31e-6  \\
			\bottomrule
	\end{tabular}
\end{center}
\end{table}

Table~\ref{table00} lists some basic properties of the test skew-symmetric
matrices $A=\frac{A_{\mathrm{o}}-A_{\mathrm{o}}^T}{2}$ and
$A=\bsmallmatrix{&A_\mathrm{o}\\-A_{\mathrm{o}}^T&}$
with $A_{\mathrm{o}}$ being the square and rectangular
real matrices selected from
the University of Florida Sparse Matrix Collection
\cite{davis2011university}, respectively,
where $nnz(A)$ denotes the number of nonzero elements of $A$,
$\sigma_{\max}(A)$ and $\sigma_{\min}(A)$ are the largest and smallest
singular values of $A$, respectively,
and $\gap(k):=\min\limits_{1\leq j\leq k}
|\lambda_j^2-\lambda_{j+1}^2|/|\lambda_{j+1}^2-\lambda_{\ell}^2|=
\min\limits_{1\leq j\leq k}|\sigma_j^2
-\sigma_{j+1}^2|/|\sigma_{j+1}^2-\sigma_{\min}^2(A)|$.
All the singular values of $A$ are computed, for the experimental purpose,
by using the Matlab built-in function {\sf svd}.
As Theorem~\ref{thm4} shows, the size of $\gap(k)$ critically affects the
performance of IRSSLBD for computing the $k$
largest conjugate eigenpairs of $A$; the bigger $\gap(k)$ is,
the faster IRSSLBD converges. Otherwise, IRSSLBD
may converge slowly.

Unless mentioned otherwise, we always run the IRSSLBD algorithm
with the default parameters described in the end of Section~\ref{sec:4}.
We also test the Matlab built-in functions {\sf eigs}
and {\sf svds} in order to illustrate the efficiency and reliability
of IRSSLBD.  Concretely, with the
same parameters as those in IRSSLBD, we use {\sf eigs}
to compute $2k$ eigenpairs corresponding to the $2k$ largest conjugate
eigenvalues in magnitude and the corresponding eigenvectors of $A$, and use
{\sf svds} to compute the $k$ or $2k$ largest singular triplets of $A$ to
obtain the $2k$ largest eigenpairs of $A$.
We record the number of matrix-vector multiplications,
abbreviated as $\#{\rm Mv}$, and the CPU time in second, denoted by
$T_{\rm cpu}$,
that the three algorithms use to achieve the same stopping tolerance.
We mention that, for most of the test matrices,
the CPU time used by each of the three algorithms is too short, e.g., fewer
than 0.1s, so that it is hard to make a convincing and
reliable comparison using CPU time.
Therefore, we will not report the CPU time in most of the experiments.
We should remind that a comparison of CPU time used by our algorithm and {\sf
eigs}, {\sf svds} may also be misleading because our IRSSLBD code is the
pure Matlab language while {\sf eigs} and {\sf svds} were programmed using
advanced and much higher efficient C/C++ language.
Nevertheless, we aim to illustrate that IRSSLBD is
indeed fast and not slower than {\sf eigs} and {\sf svds}
even in terms of CPU time.

\begin{exper} \label{exper1}
We compute $k=1,5,10$ pairs of the largest
conjugate eigenvalues in magnitude of
$A=\mathrm{plsk1919}$ and the corresponding eigenvectors.
\end{exper}

\begin{table}[tbhp]
	\caption{Results on $A=\mathrm{plsk1919}$.}\label{table4}
	\begin{center}
		\begin{tabular}{ccccccc} \toprule
			\multirow{2}{*}{\ \ Algorithm\ \ }
			&\multicolumn{2}{c}{$k=1$}
			&\multicolumn{2}{c}{$k=5$}
			&\multicolumn{2}{c}{$k=10$}   \\
			\cmidrule(lr){2-3} \cmidrule(lr){4-5} \cmidrule(lr){6-7}
			&\ \  $\#$Mv\ \  &\ \ \ $T_{\mathrm{cpu}}$\ \ \ \ \
			&\ \  $\#$Mv\ \  &\ \ \ $T_{\mathrm{cpu}}$\ \ \ \ \
			&\ \  $\#$Mv\ \  &\ \ \ $T_{\mathrm{cpu}}$\ \ \ \ \ \\ \midrule
			eigs &58&4.52e-3&122&1.02e-2&186&3.06e-2\\
			SVDS($k$) 	&118&4.11e-3&258&1.22e-2&416&1.61e-2\\
			SVDS($2k$) 	&266&8.35e-3&416&1.61e-2&448&2.04e-2\\
			IRSSLBD 	&50&8.26e-3&106&4.03e-2&134&5.20e-2 \\
			\bottomrule
		\end{tabular}
	\end{center}
\end{table}
Table~\ref{table4} displays the $\#$Mv and $T_{\mathrm{cpu}}$
used by the three algorithms, where
SVDS($k$) and SVDS($2k$) indicate that we use {\sf svds} to compute the $k$ and
$2k$ largest singular triplets of $A$, respectively.
We have found that IRSSLBD and {\sf eigs} solved the SVD problems successfully
and computed the desired eigenpairs of $A$ correctly.
In terms of $\#$Mv, we can see from the table that
IRSSLBD is slightly more efficient than {\sf eigs}
for $k=1,5$ but outperforms
{\sf eigs} substantially for $k=10$. IRSSLBD is thus superior to {\sf eigs}
for this matrix.
We also see from Table~\ref{table4} that the CPU time used by IRSSLBD
is very short and competitive with that used by {\sf eigs}.
The same phenomena have been observed for most of the test matrices.

Remarkably, we have found that, without using
explicit partial reorthogonalization to maintain the
semi-biorthogonality of the left and right Lanczos vectors,
IRSSLBD and {\sf svds} encounter some severe troubles,
and they behave similarly. Indeed, for $k=1$, SVDS($k$)
succeeded in computing the largest singular triplet of $A$,
from which the desired largest conjugate eigenpairs are recovered
as described in Section~\ref{subsec:2}.
For $k=5$ and $10$, however, SVDS($k$) computes duplicate
approximations to the singular triplets associated with each
conjugate eigenvalue pair of $A$, and ghosts appeared,
causing that  SVDS($k$) computed only half of the desired
eigenpairs, i.e., the first to third and the first to fifth
conjugate eigenpairs, respectively.
The ghost phenomena not only delay the convergence of SVDS($k$)
considerably but also make it converge irregularly and fail
to find all the desired singular triplets. SVDS($k$) also
uses much more $\#$Mv than IRSSLBD does for $k=10$.
Likewise, for each $k=1,5,10$, SVDS($2k$) succeeds to compute only $k$
approximations to the desired largest singular triplets of $A$, from which the
desired approximate eigenpairs can be recovered, but consumes
$3\sim 5$ times $\#$Mv as many
as those used by IRSSLBD. This shows that
direct application of an LBD algorithm to
skew-symmetric matrix eigenvalue problems does not work
well in finite precision arithmetic. In contrast, IRSSLBD
works well when the semi-orthogonality and semi-biorthogonality
are maintained. Therefore, the semi-biorthogonality of
left and right Lanczos vectors is crucial.

\begin{exper}\label{exper2}
We compute $k=1,5,10$ pairs of largest conjugate eigenvalues
in magnitude and the corresponding eigenvectors of the
twelve square matrices
$A=\frac{A_{\mathrm{o}}-A_{\mathrm{o}}^T}{2}$ with
$A_{\mathrm{o}}=\mathrm{tols4000}$, $\mathrm{M80PI}$, $\mathrm{visco1}$,
$\mathrm{utm5940}$, $\mathrm{coater2}$, $\mathrm{flowm}$, $\mathrm{inlet}$,
$\mathrm{e40r0}$, $\mathrm{ns3Da}$, $\mathrm{epb2}$, $\mathrm{barth}$ and
$\mathrm{Hamrle2}$, respectively.
We will report the results on IRSSLBD and {\sf eigs} only since
{\sf svds} behaves irregularly
and computes only partial desired singular triplets because of the severe
loss of numerical biorthogonality of left and right Lanczos vectors.  	
\end{exper}

\begin{table}[tbhp]
	\caption{The \#{\rm Mv} used by IRSSLBD and {\sf eigs}
		to compute $k=1,5,10$ pairs of largest conjugate
		eigenvalues in magnitude and the
corresponding eigenvectors of $A=\frac{A_{\mathrm{o}}-A_{\mathrm{o}}^T}{2}$.}\label{table1}
	\begin{center}
		\begin{tabular}{ccccrcc} \toprule
			\multirow{2}{*}{$A_{\mathrm{o}}$}
			&\multicolumn{3}{c}{Algorithm: {\sf eigs}}
			&\multicolumn{3}{c}{Algorithm: IRSSLBD}
			   \\
			\cmidrule(lr){2-4} \cmidrule(lr){5-7}
			&\ \ $k=1$\ \ &\ \ $k=5$\ \  &\ \ $k=10$\ \
			&\ $k=1$\ \ \ \ \ \ &\ $k=5$\ \  &\ $k=10$ \ \\ \midrule
			
			tols4000	&338	&278	&460 &174 (51.48)	&232 (83.45) 	&292 (63.48)	    \\
			M80PI		&58		&110	&254 &44  (75.86)	&100 (90.91)	&158 (62.20)		\\
			visco1		&198	&302	&912 &132 (66.67)	&282 (93.38)	&370 (40.57)		\\
			utm5940		&86		&336	&788 &76  (88.37)	&278 (82.74)	&306 (38.83)		\\		
			coater2		&142	&162	&282 &84  (59.15)	&130 (80.25)	&152 (53.90)	    \\
			flowm		&478	&384	&702 &228 (47.70)	&346 (90.10)	&394 (56.13)		\\
			inlet		&170	&412	&572 &132 (77.65)	&316 (76.70)	&310 (54.20)		\\
			e40r0		&86		&168	&576 &70  (81.40)	&148 (88.10)	&284 (49.31)		\\
			ns3Da		&114	&226	&960 &92  (80.70)	&204 (90.27)	&376 (39.17)		\\
			epb2 		&198	&328	&828 &138 (69.70)	&278 (84.76)	&384 (46.38)		\\
			barth		&310	&392	&510 &172 (55.48)	&318 (81.12)	&322 (63.14)		\\
			Hamrle2		&3558	&300  &16178 &136 (\hspace{0.48em}3.82)	&190 (63.33)	&354 (\hspace{0.48em}2.19)		\\			
			\bottomrule
		\end{tabular}
	\end{center}
\end{table}

IRSSLBD and {\sf eigs} successfully compute the desired eigenpairs
of all the test matrices.
Table~\ref{table1} displays the $\#$Mv used by them,
where each quantity in the parenthesis is the percentage of
$\#$Mv used by IRSSLBD over that by {\sf eigs} and we drop
$\%$ to save the space.

For $k=1$, we see from Table~\ref{table1} that
IRSSLBD is always more efficient than {\sf eigs}, and it often outperforms
the latter considerably for most of the test matrices.
For instance, IRSSLBD uses fewer than $60\%$ $\#$Mv of {\sf eigs}
for $A_{\mathrm{o}}=$ tols4000, coater2, flowm and barth.
Strikingly, IRSSLBD consumes only $3.82\%$ of the $\#$Mv used
by {\sf eigs} for $A_{\mathrm{o}}=\mathrm{Hamrle2}$.
For $k=5$, IRSSLBD is often considerably more efficient than {\sf eigs}
for most of the test matrices, especially $A_{\mathrm{o}}=$ Hamrle2,
where IRSSLBD uses $36\%$ fewer $\#$Mv.
For $k=10$, IRSSLBD uses $62\%$--$64\%$ of the $\#$Mv consumed by {\sf eigs}
for $A_{\mathrm{o}}=$ tols4000, M80PI and barth,
and it uses $54\%$--$57\%$ of the $\#$Mv
for $A_{\mathrm{o}}=$ coater2, flowm, and inlet.
Therefore, for these six matrices, the advantage of IRSSLBD over {\sf eigs}
is considerable. For the five matrices
$A_{\mathrm{o}}=$ visco1, utm5940, e40r0, ns3Da and epb2,
IRSSLBD is even more efficient than ${\sf eigs}$
as it consumes no more than half
of the $\#$Mv used by {\sf eigs}. For $A_{\mathrm{o}}=$ Hamrle2,
IRSSLBD only uses $2.19\%$ of the $\#$Mv cost by {\sf eigs}, and the
save is huge.

In summary, for these twelve test matrices, our IRSSLBD algorithm outperforms
{\sf eigs} and is often considerably more efficient than the
latter for the given three $k$.

\begin{exper}
We compute $k=1,5,10$ pairs of the largest conjugate eigenvalues of
$A=\bsmallmatrix{&A_{\mathrm{o}}\\-A_{\mathrm{o}}^T&}$ and the corresponding
eigenvectors with the eleven matrices
$A_{\mathrm{o}}=$ delf, large, Marag, r05, nl,
deter7, Kemel, p05, storm, south31 and dano3.
\end{exper}

\begin{table}[tbhp]
	\caption{The \#Mv used by IRSSLBD and {\sf eigs}
to compute $k=1,5,10$ pairs of the largest conjugate eigenpairs of
$A=\bsmallmatrix{&A_{\mathrm{o}}\\-A_{\mathrm{o}}^T&}$. }\label{table3}
	\begin{center}
		\begin{tabular}{ccccrrr} \toprule
			\multirow{2}{*}{$A_{\mathrm{o}}$}
			&\multicolumn{3}{c}{Algorithm: {\sf eigs}}
			&\multicolumn{3}{c}{Algorithm: IRSSLBD}
			   \\
			\cmidrule(lr){2-4} \cmidrule(lr){5-7}
			&\ $k=1$\  &\ $k=5$\   &\ $k=10$\ \
			&\ $k=1$\ \ \ \ \ \ &\ \ $k=5$\ \ \ \ \ \  &\ \ $k=10$\ \ \ \ \ \ \\ \midrule			
			delf		&478	&214	&192 &142 (29.71)	    &142 (66.36)	&138 (71.88)	  \\
			large		&590	&284	&268 &188 (31.86)	    &172 (60.56)	&172 (64.18)	  \\
			Marag		&58		&84		&178 &38 (65.52)		&82 (97.62)		&130 (73.03)	  \\
			r05			&86		&70		&668 &54 (62.79)		&56 (80.00)		&222 (33.23)	  \\
			nl			&114	&242	&222 &76 (66.67)		&172 (71.07)	&126 (56.76)	  \\
			deter7		&58		&346	&144 &36 (62.07)		&148 (42.77)	&122 (84.72)	  \\
			Kemel		&142	&220	&636 &118 (83.10)	    &198 (90.00)	&306 (48.11)      \\
			p05			&114	&90		&692 &58 (50.88)		&62 (68.89)		&234 (32.82)	  \\
			storm		&58		&84		&78  &46 (79.31)		&64 (76.19)		&66  (84.62)	  \\
			south31		&30		&42		&146 &8  (26.67)		&26 (61.90)		&92  (63.01)	  \\
			dano3		&114	&70	   &5922 &58 (50.88)		&70 (100.0)		&196 (\hspace{0.45em}3.31)	  \\
			
			\bottomrule
		\end{tabular}
	\end{center}
\end{table}

These test matrices are all singular with large dimensional null spaces.
In order to purge the null space from the searching subspaces,
we take the initial vector $q_{1}$ in IRSSLBD and {\sf eigs} to
be the unit-length vector normalized from $A\cdot [1,\dots,1]^T$.
All the other parameters are the same for the two algorithms with the
default values as described previously.
Both IRSSLBD and {\sf eigs} succeed in computing all the desired
eigenpairs of $A$ for the three $k$'s.
Table~\ref{table3} displays the \#Mv, where the
quantities in the parentheses are as in Table~\ref{table1}.

We have also observed an interesting phenomenon from this table
and the previous experiments. As is seen from Table~\ref{table3},
for some test matrices, e.g., delf, large, nl, deter7, p05 and dano,
when more conjugate eigenpairs of $A$ are desired,
IRSSLBD or {\sf eigs} may use even fewer $\#$Mv.
This occurs mainly due to the implicit restart technique.
Specifically, when more conjugate eigenpairs are desired,
larger dimensional new initial searching subspaces $\UU_k$ and $\VV_k$ are
retained in the next restart, so that
the resulting restarted searching subspaces may contain more information
on the desired eigenvectors, causing that
the largest approximate eigenpairs may converge faster.

As for an overall efficiency comparison of IRSSLBD and {\sf eigs},
we have also observed the advantage
of the former over the latter, similar to that in Experiment~\ref{exper2}.
Therefore, we now make some comments on them together.
Before doing so, let us regard each test matrix with each selected
$k$ as one independent problem, so in total we have 33 problems, which
will be distinguished by the name-value pairs $(A_{\mathrm{o}}, k)$.

As is seen from Table~\ref{table3}, for
$(A_{\mathrm{o}},k)=(\mathrm{Marag},5)$ and $(\mathrm{dano3},5)$,
IRSSLBD and {\sf eigs} use almost the same $\#$Mv.
For $(A_{\mathrm{o}},k)=(\mathrm{Kemel},1)$, $(\mathrm{Kemel},5)$,
$(\mathrm{deter7},10)$ and  $(\mathrm{strom},10)$,
IRSSLBD is slightly better than {\sf eigs} and uses $82\%$--$90\%$ of
the $\#$Mv.
For $(A_{\mathrm{o}},\!k)\!=\!(\mathrm{Marag},\!1)$,
$(\mathrm{r05},1)$, $(\mathrm{nl},1)$,
$(\mathrm{deter7},1)$,
$(\mathrm{storm},1)$,
$(\mathrm{delf},5)$, $(\mathrm{large},\! 5)$, $(\mathrm{r05},5)$,
$(\mathrm{nl},\!5)$, $(\mathrm{p05},5)$, $(\mathrm{storm},\! 5)$,
$(\mathrm{south31},5)$, $(\mathrm{delf},10)$, $(\mathrm{large},10)$,
$(\mathrm{Marag},10)$, $(\mathrm{nl},10)$ and
$(\mathrm{south31},10)$, IRSSLBD outperforms {\sf eigs} considerably
as it costs $56\%$--$80\%$ of the $\#$Mv used by {\sf eigs}.
For $(A_{\mathrm{o}},k)=(\mathrm{delf},1)$,
$(\mathrm{large},1)$, $(\mathrm{p05},1)$,
$(\mathrm{south31},1)$, $(\mathrm{dano3},1)$,
$(\mathrm{deter7},5)$, $(\mathrm{r05},10)$,
$(\mathrm{Kemel},10)$,
$(\mathrm{p05},10)$
and $(\mathrm{dano3},10)$,
IRSSLBD is substantially more efficient than
{\sf eigs} as it uses only approximately
half of the $\#$Mv or even fewer.
Particularly, for $(A_{\mathrm{o}},k)=(\mathrm{dano3},10)$,
the advantage of IRSSLBD over {\sf eigs} is very substantial since,
$\#$Mv used by the former is only $3.5\%$ of that by the latter.

In summary, by choosing an appropriate starting vector,
IRSSLBD suits well for computing several largest conjugate eigenpairs
of a singular skew-symmetric matrix, and it is more and
can be much more efficient than {\sf eigs}.

\section{Conclusions}\label{sec:6}

We have shown that the eigendecomposition of a real skew-symmetric matrix
$A$ has a close relationship with its structured SVD,
by which the computation of its several extreme conjugate eigenpairs
can be equivalently transformed into the computation of the largest
singular triplets in real arithmetic. For $A$ large,
as a key step toward the efficient computation of the desired
eigenpairs, by means of
the equivalence result on the tridiagonal decomposition of $A$
and a half sized bidiagonal decomposition, the SSLBD process is exploited to
successively compute a sequence of partial bidiagonal decompositions. The
process provides us with a sequence of left and right searching subspaces and
bidiagonal matrices, making
the computation of a partial SVD of $A$ possible,
from which the desired eigenpairs of $A$
is obtained without involving complex arithmetic.

We have made a detailed theoretical analysis
on the SSLBD process. Based on the results
obtained, we have proposed a SSLBD method for
computing several extreme singular triplets of $A$, from which the desired
extreme conjugate eigenvalues in magnitude and the corresponding
eigenvectors can be recovered.
We have established estimates for the distance between a desired
eigenspace and the underlying subspaces that the SSLBD
process generates, showing how it converges to zero.
In terms of the distance between the desired eigenspace and the searching
subspaces, we have derived a priori error bounds for the recovered conjugate
eigenvalues and the associated eigenspaces. The results indicate
that the SSLBD method generally favors extreme
conjugate eigenpairs of $A$.

We have made a numerical analysis on the SSLBD process and
proposed an efficient and reliable approach to
track the orthogonality and biorthogonality among the computed left and
right Lanczos vectors. Unlike the standard LBD process, for
its skew-symmetric variant SSLBD, we have proved that
only the semi-orthogonality of the left and right Lanczos vectors
does not suffice and that the semi-biorthogonality of
these two sets of vectors is absolutely necessary, which has been
numerically confirmed.
Based on these, we have designed an effective partial reorthogonalization
strategy for the SSLBD process
to maintain the desired semi-orthogonality and semi-biorthogonality.
Combining the implicit restart with the partial reorthogonalization proposed,
we have developed an implicitly restarted SSLBD algorithm to
compute several largest singular triplets of a
large real skew-symmetric matrix.

A lot of numerical experiments have confirmed the effectiveness and
high efficiency of the implicitly restarted SSLBD algorithm. They
have indicated that it is at
least competitive with {\sf eigs} and often outperforms
the latter considerably.


\end{document}